\documentclass[12pt]{article}
\usepackage{amsmath,amssymb,extarrows}
\usepackage{hyperref}
\usepackage{url}
\usepackage{enumerate}
\usepackage{tabularx}
\usepackage{lscape}
\usepackage{longtable}
\usepackage{amsfonts,amsthm,cite,color}
\usepackage{epsfig}
\allowdisplaybreaks[4]

\newtheorem{theorem}{Theorem}[section]
\newtheorem{lemma}[theorem]{Lemma}
\newtheorem{corollary}[theorem]{Corollary}
\theoremstyle{definition}

\theoremstyle{remark}

\numberwithin{equation}{section}

\begin{document}

\begin{center}
		{\Large \bf {Double-sum Rogers--Ramanujan type identities}}
		\vskip 6mm
		{Duanyu Chen$^a$,  Xiangxin Liu$^b$ and Lisa Hui Sun$^{c*}$
			\\[2mm]
			Center for Combinatorics, LPMC, Nankai University, Tianjin 300071, P.R. China \\[2mm]
$^a$chen.duanyu@mail.nankai.edu.cn, $^b$liuxx@mail.nankai.edu.cn, $^c$sunhui@nankai.edu.cn \\[2mm]}
	\end{center}



{\noindent \bf Abstract.}
As the $q$-analog of Chebyshev polynomials,   $q$-Hermite polynomials form a cornerstone in the family of $q$-orthogonal polynomials, which play a fundamental role in quantum algebra and mathematical physics. Recently, Andrews obtained a series of Rogers--Ramanujan type identities by constructing Bailey pairs from Chebyshev polynomials. In this paper, by applying the expansion formula of  Chebyshev polynomials in terms of $q$-Hermite polynomials and using the orthogonality relations, we derive a series of Rogers--Ramanujan type identities on double sums, which further generalized the known results due to Andrews, Shi, Sun and Yao.

	{\noindent \bf Keywords:}  Chebyshev polynomials; $q$-Hermite polynomials; Rogers-Ramanujan type identities
	
	{\noindent \bf AMS Classification:}  05A30; 33D45
	
	\allowdisplaybreaks

\section{Introduction}

The famous Rogers--Ramanujan identities
\begin{align}
&\sum_{n=0}^{\infty} \frac{q^{n^2}}{(q; q)_n} =\frac{1}{(q, q^4; q^5)_\infty}, \label{RR1}\\
&\sum_{n=0}^{\infty} \frac{q^{n^2+n}}{(q; q)_n} =\frac{1}{(q^2, q^3; q^5)_\infty}, \label{RR2}
\end{align}
were first discovered and proved in 1894 by Rogers  \cite{Rog94}, and were rediscovered independently by Ramanujan \cite{RR19}, which have  profound applications in combinatorics, number theory, modular forms, statistical mechanics and representation theory. Thereby the series--product identities similar to the above form are called Rogers--Ramanujan type identities, which were systematically studied and generalized by Bailey \cite{Bai47,Bai49}. Later, Slater \cite{Sla51,Sla52} compiled and discovered a list of 130 identities of Rogers-Ramanujan type applying various Bailey pairs, which is widely known as ``Slater's list". Sills, Laughlin and Zimmer \cite{Sil03,LSZ08} further generalized Slater's results and obtained a more complete list of Rogers--Ramanujan type identities.

MacMahon \cite{Mac16}  and Schur \cite{Sch17} also
gave the combinatorial versions of the two Rogers--Ramanujan identities.  In 1961, Gordon \cite{Gor61} provided a combinatorial interpretation of the generalized Rogers--Ramanujan identities. Then in 1974, Andrews \cite{And74} expressed Gordon's result in terms of the following $q$-identity
\begin{align}
\sum_{n_1 \geq \dots \geq n_{k-1} \geq 0} \frac{q^{n_1^2+ \cdots +n_{k-1}^2 +n_{i}+ \cdots +n_{k-1}}}
{(q)_{n_1-n_2} \cdots (q)_{n_{k-2}-n_{k-1}} (q)_{n_{k-1}}}
=\frac{(q^{i},q^{2k-i+1},q^{2k+1};q^{2k+1})_\infty}{(q)_\infty} \label{Andrews-Gordon-identity}
\end{align}
with $k \geq 2$ and $1 \leq i \leq k$ being two integers, which is  the well-known Andrews--Gordon identity.

In 2019, Kanade and Russell \cite{KR19}  searched for Rogers--Ramanujan type identities and conjectured several new ones which are related to level
2 characters of the affine Lie algebra $A_9^{(2)}$. Since then numerous new Rogers--Ramanujan type identities have been derived, especially in the form of double sums. By using the $q$-Zeilberger algorithm to get the recurrence relation, Andrews and Uncu \cite{AU23} proved a double sum identity of Rogers--Ramanujan type as follows
\begin{align*}
\sum_{m,n\geq0} \frac{(-1)^n q^{\binom{3n+1}{2} +m^2+3mn}}{(q)_m (q^3;q^3)_n} =\frac{1}{(q;q^3)_\infty}.
\end{align*}

Note that the right hand sides of the Rogers--Ramanujan identities \eqref{RR1} and \eqref{RR2} can be seen as modular forms, which is not easy to be observed from their sum--sides.
Nahm \cite{Nahm07,Nahm941,Nahm942} considered a specific class of $q$-series as follows, which is known as Nahm sum or Nahm series  
\begin{align*}
f_{A,B,C}(q)=\sum_{{{n}}=(n_1,\ldots,n_r)^{T} \in \mathbb{N}^r} \frac{q^{\frac{1}{2} {{n}}^T A{{n}} +{{n}}^T B+C}}{(q;q)_{n_1} \cdots (q;q)_{n_r}},
\end{align*}
where $r$ is a positive integer, $A$ is a real positive definite symmetric $r\times r$ matrix, $B$ is a $r$-dimensional column vector, and $C$ is a rational number. In \cite{Nahm07}, Nahm proposed a conjecture  that characterizes the necessary and sufficient conditions on the matrix component of a modular triple $(A,B,C)$ to ensure that the associated series is a modular function.  Then Wang \cite{Wang23} generalized the identities on Nahm series by indices.  Later, Cao and Wang \cite{CA23} applying the contour integral method to establish a number of multi-sum Rogers--Ramanujan type identities including
\begin{align}
\sum_{i,j\geq0} \frac{(-1)^{i+j} q^{2a(i-j)} q^{(i^2-i+j^2-j+4a(i-j)^2)/2}}{(q)_i (q)_j} =\frac{(q, q^{4a}, q^{4a+1}; q^{4a+1})_\infty}{(q)_\infty}. \label{Wang-(3.22)}
\end{align}

In this paper, based on the orthogonality relations of  trigonometric functions and the most classical $q$-orthogonal polynomials, that is,  the (continuous) $q$-Hermite polynomials, we obtain serveral double sum Rogers--Ramanujan type identities.

The $q$-orthogonal polynomials play an important role in the study of $q$-series, which were introduced by Rogers and Szeg\"o  \cite{Rog93, Rog94, Sze26}. Based on these works, in 1985, Askey and Wilson \cite{AW85} derived one of the most general class of $q$-orthogonal polynomials, which is named Askey--Wilson polynomials. As its special case, the (continuous) $q$-Hermite polynomials is defined as follows
\begin{align}
H_n(x|q) =\sum_{k=0}^{n} {n\brack k} e^{-i(n-2k) \theta},\label{q-hermite}
\end{align}
where $x=\cos\theta$ and $H_0(x|q)=1$.

The orthogonality relation of  $q$-Hermite polynomials is
\begin{align}
\frac{(q)_\infty}{2\pi} \int_{0}^{\pi} H_n(x|q) H_m(x|q)(e^{2i\theta}, e^{-2i\theta})_\infty d\theta =\delta_{mn} (q)_n,\label{Hermite-orth}
\end{align}
where $\delta$ is the Kronecker delta function such that  $\delta_{mn}=1$, if $m=n$, otherwise $\delta_{mn}=0$.

In 1999,  Garrett, Ismail, and Stanton \cite{GIS99} applied the orthogonality relations of the $q$-Hermite and $q$-ultraspherical polynomials to study the Rogers--Ramanujan identities. In 2012, Andrews \cite{And12} demonstrated the applications of $q$-orthogonal polynomials in the study of mock theta functions. Many other $q$-orthogonal polynomials have been studied over the years by Al-Salam, Ismail, Liu, Masson and Stanton et al. in \cite{AI88,IM94,CL25}.

As a limit case of $q$-Hermite polynomials, Chebyshev polynomials of the second kind can be expressed as $U_n(x)=H_n(x|0)$. Garrett, Ismail and Stanton \cite{GIS99}  gave the following expansion formula
\begin{align}
U_n(x) =\sum_{j=0}^{\lfloor \frac{n}{2} \rfloor} {n-j \brack j} (-1)^j q^{\binom{j+1}{2}} H_{n-2j}(x|q),\label{U_n-to-Hermite}
\end{align}
which bridges the study of classical orthogonal polynomials and  $q$-orthogonal polynomials.
Recently, Andrews \cite{And12, And19} applied  orthogonal polynomials  to study various problems in $q$-series, especially to Rogers--Ramanujan type identities. Andrews obtained the following two identities in \cite{And19}
\begin{align}
&\sum_{n=0}^{\infty} \frac{q^{n^2+n} \prod_{j=1}^{n} (1+2xq^j+q^{2j})}{(q)_{2n+1}}=\frac{1}{(q)_\infty} \sum_{n=0}^{\infty} q^{3\binom{n+1}{2}} V_n(x), \label{Andrews-(4.2)}\\
&\sum_{n=0}^{\infty} \frac{q^{n^2+n} \prod_{j=0}^{n-1} (1+2xq^j+q^{2j})}{(q)_{2n}} =\frac{1}{(q)_\infty} \sum_{n=0}^{\infty} q^{n(3n+1)/2} (1-q^{2n+1}) W_n(x), \label{Andrews-(5.1)}
\end{align}
where  $V_n(x)$ and $W_n(x)$ are Chebyshev polynomials of the third and fourth kinds, respectively.

Inspired by Andrews' work, Sun \cite{Sun23} and Yao \cite{Yao25} further studied Rogers--Ramanujan type identities by constructing Bailey pairs involving Chebyshev polynomials, which leads to the following identities \cite[(3.1)]{Yao25} and \cite[(6.2)]{Sun23}:
\begin{align}
&\sum_{n=0}^{\infty} \frac{q^{n^2 +2n} \prod_{j=1}^{n} (1 +2x q^j +q^{2j})}{(1 +q^{n+1}) (q)_{2n+1}} =\frac{1}{(q)_{\infty}} \sum_{n=0}^{\infty} (1 -q^{n+1}) q^{n(3n+5)/2} U_n(x),
\label{Yao-(3.1)}\\[2mm]
&\sum_{n=0}^\infty \frac{q^{n^2+n} (-1;q^2)_n \prod_{i=1}^n (1+2xq^{2i-1}+q^{4i-2})}{(q^2;q^2)_{2n}} \label{Sun-(6.2)}\\
&\qquad \qquad =\frac{(-q^2;q^2)_\infty}{(q^2;q^2)_{\infty}} \sum_{n=0}^\infty \frac{(-1;q^2)_n q^{2n^2+n}} {(-q^2;q^2)_n} \big( V_n(x)+V_{n-1}(x) \big).\nonumber
\end{align}

From these identities on Chebyshev polynomials, by expanding $U_n(x)$ using formula \eqref{U_n-to-Hermite} and then applying the orthogonality relation \eqref{Hermite-orth} of $q$-Hermite polynomials, we obtain some identities of Rogers--Ramanujan type on double sums. For example, from \eqref{Yao-(3.1)} we get
\begin{align}
\sum_{n=0}^{\infty} \sum_{m=-\lfloor \frac{n+1}{2} \rfloor}^{\lfloor \frac{n}{2} \rfloor} \frac{(-1)^m q^{n(n+2)+m(5m+3)/2}} {(1+q^{n+1}) (q)_{2n+1}} {2n+1 \brack n-2m}=\frac{(q, q^{12}, q^{13};q^{13})_\infty}{(q)_\infty}.\label{Yao-(3.1)-orth-simply}
\end{align}
Note that identity \eqref{Yao-(3.1)-orth-simply} can be seen as a new form of the Andrews--Gordon identity \eqref{Andrews-Gordon-identity} with $k=6, i=1$, in which it takes the form of a quintuple sum. It also coincides with \eqref{Wang-(3.22)} with  $a=3$.

This paper is organized as follows. In Section~2, we introduce some basic definitions and notations for $q$-series and orthogonal polynomials. In Section~3, we obtain a series of integral identities on $q$-Hermite polynomials based on the $q$-binomial theorem. In Section~4, from identities on Chebyshev polynomials, by using the integral identities obtained in Section~3, we derive Rogers--Ramanujan type identities on double sums such as \eqref{Yao-(3.1)-orth-simply} and also an identity on Appell--Lerch series. In Section~5, by transforming Chebyshev polynomials $V_n(x)$ and $W_n(x)$ into $U_n(x)$, we also obtain triple-sum identities with a free variable $\ell$. When $\ell=0$, it leads to more Rogers--Ramanujan type identities on double sums.

\section{Preliminaries}

Throughout this paper, we adopt  standard notations and terminologies
for $q$-series \cite{GR04} and we assume that $|q|<1$. The $q$-shifted factorials are given by
\begin{align}\label{q-standard notations}
(a)_n=(a;q)_n=\begin{cases}
1, & \text{\it if $n=0$}, \\[2mm]
\prod\limits_{j=1}^{n} (1-aq^{j-1}), & \text{\it if $n\geq 1$}, 
\end{cases}
\end{align}
and
\[
(a)_\infty =(a;q)_\infty =\prod_{n=0}^\infty (1-aq^n).
\]
There are more compact notations for the multiple $q$-shifted factorials:
\begin{align*}
&(a_1, a_2, \ldots, a_m)_n=(a_1, a_2, \ldots, a_m;q)_n=(a_1;q)_n (a_2;q)_n \cdots (a_m;q)_n,\\
&(a_1, a_2, \ldots, a_m)_{\infty}=(a_1, a_2, \ldots, a_m;q)_{\infty}=(a_1;q)_{\infty} (a_2;q)_{\infty} \cdots (a_m;q)_{\infty}.
\end{align*}
The $q$-binomial coefficients, or Gaussian polynomials are given by
\[
{n \brack k}_q=
  \dfrac{(q;q)_n} {(q;q)_k(q;q)_{n-k}},
\]
where the subscript ``$q$" is usually omitted if no confusion arises.
The  $q$-binomial theorem  is known as \cite[(3.3.6)]{And84}
\begin{align}
\sum^{n}_{j=0} {n\brack j} (-1)^j q^{\binom{j}{2}} z^j=(z)_n. \label{q-binomial-thm}
\end{align}

The well--known Jacobi's triple product identity \cite[(II.28)]{GR04} is
\begin{align}
\sum_{n=-\infty}^{\infty} (-1)^n q^{\binom{n}{2}} z^n =(z, q/z, q)_\infty. \label{JTP}
\end{align}

Recall that Chebyshev polynomials $T_n(x), U_n(x), V_n(x), W_n(x)$ of the first, second, third and fourth kinds are defined  as follows
\begin{align*}
&T_n(x)=\cos(n \theta),\qquad\qquad
U_n(x)=\frac{\sin(n+1)\theta}{\sin\theta},\\
&V_n(x)=\frac{\cos(n +\frac{1}{2}) \theta}{\cos \frac{1}{2} \theta}, \qquad
W_n(x)=\frac{\sin(n +\frac{1}{2}) \theta}{\sin \frac{1}{2} \theta},
\end{align*}
where $x=\cos \theta$ and the initial conditions are $T_0(x)=U_0(x)=V_0(x)=W_0(x)=1$. For convenience, we also set $U_n(x)=V_n(x)=W_n(x)=0$ for $n< 0$. In fact, these four polynomials satisfy the same recurrence relation
\begin{align*}
p_n(x)=2x p_{n-1}(x) -p_{n-2}(x), \qquad (n\geq 2)
\end{align*}
with $p_0(x)=1$, and $p_1(x)=x, 2x, 2x-1, 2x+1$, respectively. They are also closely related to each other. As given in \cite{MH03} for $n\geq 1$, we see that
\begin{align}
V_n(x)=U_n(x)-U_{n-1}(x),\label{V_n}
\end{align}
and
\begin{align}
W_n(x)=U_n(x)+U_{n-1}(x).\label{W_n}
\end{align}

\section{Integral identities on $q$-Hermite polynomials}

In this section, based on the $q$-binomial theorem, we obtain a series of integral identities on $q$-Hermite polynomials, which will be used to derive Rogers--Ramanujan type identities.

By following  Rogers' way \cite{Rog17} to treat $q$-polynomials in terms of Fourier series, we replace $x$ by $\cos \theta=(e^{i\theta}+e^{-i\theta})/2$. As pointed out by Andrews \cite{And19}, the following formula is a finite version of the result due to Rogers \cite{Rog17}
\begin{align}
(-qe^{i\theta}, -qe^{-i\theta})_n =\prod_{j=1}^n (1+2q^j \cos\theta + q^{2j}). \label{triangle fomular}
\end{align}
From the orthogonality relations of  trigonometric functions, it is obvious that
\begin{align}\label{integral-value}
\frac{1}{2\pi} \int_{-\pi}^\pi e^{i m\theta} d \theta=\begin{cases}
1, & \text{\it if $m=0$}, \\[2mm]
0, & \text{\it otherwise}.
\end{cases}
\end{align}
By combining with the orthogonality relation of $q$-Hermite polynomials,  we derive the following integral identity.

\begin{lemma}\label{integral1-generalize-2hermite}
We have
\begin{align} \label{int-eq-2hermite}
&\frac{(q)_\infty}{2\pi} \int_{-\pi}^{\pi} H_{\ell_1} (x|q) H_{\ell_2} (x|q) (-qe^{i\theta}, -qe^{-i\theta})_n (e^{2i\theta}, e^{-2i\theta})_\infty \ d\theta \\
&= \sum_{{\begin{subarray}{c} k_i=0 \\ i=1,2
\end{subarray}}}^{\ell_i} \sum_{m=-\infty}^{\infty} (-1)^m q^{\binom{2m+2k_1+2k_2-\ell_1-\ell_2+1}{2}+ \binom{m+1}{2}} \frac{(1+q^{n+1})(1-q^{2m+2k_1+2k_2-\ell_1-\ell_2+1})} {1-q^{n+2m+2k_1+2k_2-\ell_1-\ell_2+2}} \nonumber  \\
&\qquad\qquad\qquad
\cdot {\ell_1 \brack k_1}{\ell_2 \brack k_2}  {2n+1 \brack  n-2m-2k_1-2k_2+\ell_1+\ell_2}, \nonumber
\end{align}
where $m$ ranges over all the integers such that the $q$-binomial coefficients in the above sum is not equal to zero.
\end{lemma}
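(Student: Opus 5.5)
We compute the integral by expanding every factor of the integrand as a Laurent polynomial or series in $z=e^{i\theta}$ and then extracting the constant term with \eqref{integral-value}.

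\textbf{Expansions.} We use three expansions.

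First, by \eqref{q-hermite},
\[
H_{\ell_1}(x|q)H_{\ell_2}(x|q)=\sum_{k_1=0}^{\ell_1}\sum_{k_2=0}^{\ell_2}{\ell_1\brack k_1}{\ell_2\brack k_2}z^{2k_1+2k_2-\ell_1-\ell_2}.
\]

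Second, the weight is rewritten with the Jacobi triple product \eqref{JTP}. Since $(e^{2i\theta},e^{-2i\theta})_\infty=(1-z^{2})(qz^{2},z^{-2})_\infty$, applying \eqref{JTP} in base $q$ with argument $z^{-2}$ gives
\[
(q)_\infty(z^{-2},qz^2,q)_\infty\cdot\frac{1-z^2}{(q)_\infty}
=\frac{1}{(q)_\infty}\,(1-z^2)\sum_{r}(-1)^rq^{\binom r2}z^{-2r}.
\]
After the shift $r\to -m$ (or $r\to m+1$) and combining the two pieces from $(1-z^2)$, we get
\[
(q)_\infty(z^2,z^{-2})_\infty=\sum_m(-1)^m q^{\binom{m+1}{2}}\big(z^{2m}-z^{-2m-2}\big)
\]
up to a harmless reindexing. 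This is the familiar identity $(q)_\infty(z^2,z^{-2})_\infty=\sum_m(-1)^mq^{m(m+1)/2}(1-z^{4m+2})z^{-2m}$, and it is where the factor $q^{\binom{m+1}{2}}$ comes from.

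Third, for the finite product we use the $q$-binomial theorem \eqref{q-binomial-thm} in the symmetric form
\[
(-qz,-q/z)_n=\frac{1}{(q)_{2n}}\sum_{j=-n}^{n}{2n\brack n+j}q^{\binom{j+1}{2}}z^{j}.
\]
This follows by writing $(-q/z)_n$ in terms of $(-z^{-1}q^{-n}\cdots)$ and applying \eqref{q-binomial-thm} to $(-zq^{-n+1};q)_{2n}$. It is Rogers' finite version of Jacobi's triple product, the same one used by Andrews in \cite{And19}.

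\textbf{Constant term.} Multiply the three expansions. The constant term of $z^{s}\cdot z^{2m}\cdot z^{j}$ (and of the companion term $z^{-2m-2}$) forces $j=-s-2m$, respectively $j=2m+2-s$, where $s=2k_1+2k_2-\ell_1-\ell_2$. So, before simplification, the integral equals
\[
\sum_{k_1,k_2}{\ell_1\brack k_1}{\ell_2\brack k_2}\sum_m(-1)^mq^{\binom{m+1}2}\big(c_{-s-2m}-c_{2m+2-s}\big),
\]
with $c_j={2n\brack n+j}q^{\binom{j+1}2}$. Note that we must also decide whether the $(q)_{2n}$ normalization from the third expansion survives. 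The stated right side has no $1/(q)_{2n}$, so the coefficient form should be $\sum_j {2n+1\brack n+\dots}$-type without the denominator. I will recheck this: in fact $(-qz,-q/z)_n$ has integer-polynomial coefficients $\sum_j {2n\brack n+j}q^{\binom{j+1}2}z^j$ with no $(q)_{2n}$ denominator, by the $q$-binomial theorem applied to $(-q^{1-n}z^{-1})_{2n}$ (up to a monomial). So no normalization is needed.

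\textbf{Reduction to one sum (main obstacle).} The step I expect to be hardest is combining the two terms $c_{-s-2m}$ and $c_{2m+2-s}$ (after reindexing one of them so both carry the same power structure) into the single term
\[
q^{\binom{2m+s+1}{2}}\frac{(1+q^{n+1})(1-q^{2m+s+1})}{1-q^{n+2m+s+2}}{2n+1\brack n-2m-s}.
\]
To get there I will use the symmetry ${2n\brack n+j}={2n\brack n-j}$ to rewrite both terms with bottom index $n-2m-s$ and $n-2m-s-1$. The exponents then match up to $q^{2m+s+1}$ factors, via $\binom{-j+1}{2}=\binom{j}{2}$. Finally, the Pascal-type relations
\[
{2n\brack k}=\frac{1-q^{2n+1-k}}{1-q^{2n+1}}{2n+1\brack k},\qquad
{2n\brack k-1}=\frac{1-q^{k}}{1-q^{2n+1}}{2n+1\brack k}
\]
express everything in terms of ${2n+1\brack n-2m-s}$. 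The remaining task is to verify the rational-function identity
\[
\frac{(1-q^{n+1+2m+s})-q^{2m+s+1}(1-q^{n-2m-s})}{1-q^{2n+1}}
=\frac{(1+q^{n+1})(1-q^{2m+s+1})}{1-q^{n+2m+s+2}}.
\]
If this does not hold identically, I will instead pair the term for $m$ with the reflected term $m\to -m-s-1$ or a similar involution, since the summand's form suggests a Bailey-type telescoping. This bookkeeping of indices and signs is where care is needed; all other steps are routine.
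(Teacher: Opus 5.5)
Your overall strategy (expand in $z=e^{i\theta}$ and take the constant term) is the paper's, but two of your three expansions are wrong, and the errors are substantive, not bookkeeping.

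\textbf{The weight.} Your JTP line correctly gives $(q)_\infty(z^2,z^{-2})_\infty=(1-z^2)\sum_m(-1)^mq^{\binom{m+1}{2}}z^{2m}$. The ``harmless reindexing'' that follows is not harmless: moving the $z^{2m+2}$ piece to $z^{-2m-2}$ turns $q^{\binom{m+1}{2}}$ into $q^{\binom{m+2}{2}}$. Your formula $\sum_m(-1)^mq^{\binom{m+1}{2}}(z^{2m}-z^{-2m-2})$ actually equals $2(q)_\infty(qz^2,z^{-2})_\infty$. Its $q^0$-part is $2-2z^{-2}$ instead of $2-z^2-z^{-2}$.

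\textbf{The finite product.} $\sum_{j=-n}^{n}{2n\brack n+j}q^{\binom{j+1}{2}}z^j$ is $(-qz,-1/z)_n$, not $(-qz,-q/z)_n$. For $n=1$ it gives $1+q+qz+z^{-1}$ rather than $1+q^2+qz+qz^{-1}$, and it is not even invariant under $z\mapsto 1/z$. The product $(-qz,-q/z)_n$ is not of the form $(x,q/x)_n$, so the $2n$-factor finite triple product does not apply.

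\textbf{The missing idea.} The paper adjoins the absent factor $1+z$:
\[
(-qz,-q/z)_n=\frac{q^{\binom{n+1}{2}}z^{-n}}{1+z}\,(-q^{-n}z)_{2n+1}=\frac{1}{1+z}\sum_{r}{2n+1\brack n-r}q^{\binom{r+1}{2}}z^{-r}.
\]
The $1/(1+z)$ then cancels the factor $1-z^2$ of the weight, leaving $(1-z)\sum_m(-1)^mq^{\binom{m+1}{2}}z^{2m}$.

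With that, the constant term forces $r=t$ or $r=t+1$, where $t=2m+s$. The integral is
\[
\sum_{k_1,k_2}{\ell_1\brack k_1}{\ell_2\brack k_2}\sum_m(-1)^mq^{\binom{m+1}{2}}\Big(q^{\binom{t+1}{2}}{2n+1\brack n-t}-q^{\binom{t+2}{2}}{2n+1\brack n-t-1}\Big),
\]
and ${2n+1\brack n-t-1}=\frac{1-q^{n-t}}{1-q^{n+t+2}}{2n+1\brack n-t}$ turns the bracket into the stated fraction in one line.

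By contrast, your ``before simplification'' expression is not the integral. The rational identity you reduce to is also false: for $t=0$, $n=1$ its left side is $(1-q)/(1-q^3)$ and its right side is $(1+q^2)(1-q)/(1-q^3)$. The fallback of pairing $m$ with a reflected index is not an argument.

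One further caution for the final combination. It is fine for every $t$ except $t=-n-2$, where the first binomial vanishes, the second equals $1$, and the fraction is $0/0$. That term is lost if $m$ is restricted to nonzero ${2n+1\brack n-t}$: for $\ell_1=\ell_2=n=0$ the integral is $2$, and the term $m=-1$ supplies half of it. Writing the summand as $q^{\binom{t+1}{2}}\frac{1-q^{t+1}}{1-q^{n+1}}{2n+2\brack n-t}$ covers every $m$.
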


\begin{proof} Applying the $q$-binomial theorem \eqref{q-binomial-thm}, we have that
\begin{align*}
(-qe^{i\theta}, -qe^{-i\theta})_n & =(-qe^{i\theta})_n\ q^{\binom{n+1}{2}} e^{-in\theta}  (-q^{-n}e^{i\theta})_n \\
&=\frac{q^{\binom{n+1}{2}}}{(1+e^{i\theta}) e^{in\theta}} (-q^{-n}e^{i\theta})_{2n+1} \\
&=\frac{1}{1+e^{i\theta}} \sum_{j=0}^{2n+1} {2n+1 \brack j} q^{\binom{n+1}{2}+ \binom{j}{2}-nj} e^{i(j-n)\theta}. 
\end{align*}
Substituting it into the left hand side of \eqref{int-eq-2hermite}, using the definition of $q$-Hermite polynomials \eqref{q-hermite} and then applying Jacobi's triple product identity \eqref{JTP}, it becomes
 \begin{align*}
&\frac{(q)_\infty}{2\pi}\sum_{{\begin{subarray}{c} k_i=0 \\ i=1,2
\end{subarray}}}^{\ell_i}  \sum_{j=0}^{2n+1} q^{\binom{n+1}{2} +\binom{j}{2}-nj} {2n+1 \brack j} {\ell_1 \brack k_1}{\ell_2 \brack k_2} \\
&\ \cdot\int_{-\pi}^{\pi} (1-e^{i\theta}) \ e^{(2k_1+2k_2-\ell_1-\ell_2+j-n) i\theta} (qe^{2i\theta}, e^{-2i\theta})_\infty \ d\theta \\ &=\sum_{{\begin{subarray}{c} k_i=0 \\ i=1,2
\end{subarray}}}^{\ell_i}  \sum_{j=0}^{2n+1} \sum_{m=-\infty}^{\infty} (-1)^m q^{\binom{n+1}{2}+ \binom{m+1}{2} +\binom{j}{2}-nj} {\ell_1 \brack k_1}{\ell_2 \brack k_2} {2n+1 \brack j} \\
&\qquad \cdot \frac{1}{2\pi} \int_{-\pi}^{\pi} (1-e^{i\theta}) e^{(2m+2k_1+2k_2-\ell_1-\ell_2+j-n)i\theta}\ d\theta.
\end{align*}
Using   \eqref{integral-value} to compute the integral and simplifying, it completes the proof.
\end{proof}

Now by taking $\ell_1=0$ or $\ell_2=0$, and noting $H_0 (x|q)=1$, the above lemma reduces to the following result.

\begin{corollary}\label{integral1-generalize}
We have
\begin{align*}
&\frac{(q)_\infty}{2\pi} \int_{-\pi}^{\pi} \  H_\ell (x|q) \ (-qe^{i\theta}, -qe^{-i\theta})_n \ (e^{2i\theta}, e^{-2i\theta})_\infty\ d\theta\\
&\quad =\sum_{m=-\infty}^{\infty} \sum_{k=0}^{\ell}  (-1)^m q^{\binom{2m+2k-\ell+1}{2}+ \binom{m+1}{2}}   {\ell \brack k}\\
&\qquad
\cdot \left({2n+1 \brack n-2m-2k+\ell} -q^{2m+2k-\ell+1} {2n+1 \brack n-2m-2k+\ell-1} \right).
\end{align*}
\end{corollary}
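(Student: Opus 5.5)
The plan is to set $\ell_2=0$ (or $\ell_1=0$) in Lemma~\ref{integral1-generalize-2hermite}. Since $H_0(x|q)=1$, the left-hand side becomes exactly the integral in the corollary. The only sum over $k_2$ that survives is $k_2=0$, and ${0\brack 0}=1$. Writing $k=k_1$, $\ell=\ell_1$ and $N=2m+2k-\ell$, the right-hand side reduces to
\begin{align*}
\sum_{m}\sum_{k=0}^{\ell}(-1)^m q^{\binom{N+1}{2}+\binom{m+1}{2}}{\ell\brack k}\frac{(1+q^{n+1})(1-q^{N+1})}{1-q^{n+N+2}}{2n+1\brack n-N}.
\end{align*}
What remains is to rewrite the rational factor as the difference of two Gaussian coefficients.

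For that step I will check the elementary identity
\begin{align*}
{2n+1\brack n-N}-q^{N+1}{2n+1\brack n-N-1}={2n+1\brack n-N}\frac{(1+q^{n+1})(1-q^{N+1})}{1-q^{n+N+2}}.
\end{align*}
It follows from ${2n+1\brack n-N-1}={2n+1\brack n-N}\frac{1-q^{n-N}}{1-q^{n+N+2}}$. The left side then equals ${2n+1\brack n-N}$ times $\bigl(1-q^{n+N+2}-q^{N+1}+q^{n+1}\bigr)/(1-q^{n+N+2})$, and the numerator factors as $(1+q^{n+1})(1-q^{N+1})$.

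The main obstacle is the boundary case $n-N=2n+2$, where $1-q^{n+N+2}=0$ and the identity above does not apply verbatim. I will avoid it by not working through the lemma's simplified statement, and instead re-running the last step of the lemma's proof with $\ell_2=0$. There the integral $\frac{1}{2\pi}\int_{-\pi}^{\pi}(1-e^{i\theta})e^{(N+j-n)i\theta}\,d\theta$ picks out two cases:
\begin{itemize}
\item $j=n-N$, with coefficient $+1$;
\item $j=n-N-1$, with coefficient $-1$.
\end{itemize}
The exponent $\binom{n+1}{2}+\binom{j}{2}-nj$ evaluates to $\binom{N+1}{2}$ in the first case and to $\binom{N+2}{2}=\binom{N+1}{2}+N+1$ in the second. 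A short expansion confirms this, since twice the exponent is $N^2+N$ and $N^2+3N+2$ respectively.

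This yields the corollary's difference form directly for all $m,k$, with the convention that Gaussian coefficients outside $0\le j\le 2n+1$ vanish. It also shows the result agrees with the lemma wherever the lemma's formula is defined.
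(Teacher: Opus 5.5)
Your proposal is correct and follows the paper's route. Like the paper, you set $\ell_2=0$ in Lemma~\ref{integral1-generalize-2hermite} and turn the factor $\frac{(1+q^{n+1})(1-q^{N+1})}{1-q^{n+N+2}}{2n+1\brack n-N}$ back into the difference of two Gaussian coefficients, which is all the paper's ``simplifying'' does. Reading the difference form directly off the $j=n-N$ and $j=n-N-1$ Fourier terms is the safer way to do this. The lemma's closed form, restricted as stated to nonzero ${2n+1\brack n-N}$, drops the boundary term $N=-n-2$: for $n=\ell=0$, the $m=-1$ term supplies $1$ of the correct total $2$. The paper's proof does not address this boundary term.
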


\begin{proof}
By taking $\ell_2=0$ and substituting $\ell_1$ by $\ell$ in Lemma \ref{integral1-generalize-2hermite}, it turns to be
\begin{align*}
&\sum_{k=0}^{\ell} \sum_{m=-\infty}^{\infty} (-1)^m q^{\binom{2m+2k-\ell+1}{2}+ \binom{m+1}{2}} \frac{(1+q^{n+1})(1-q^{2m+2k-\ell+1})} {1-q^{n+2m+2k-\ell+2}}\\
&\qquad\qquad\qquad\qquad\qquad\qquad\qquad\qquad\qquad
\cdot {\ell \brack k} {2n+1 \brack n-2m-2k+\ell}\\[2mm]
&=\sum_{m=-\infty}^{\infty} \sum_{k=0}^{\ell}  (-1)^m q^{\binom{2m+2k-\ell+1}{2}+ \binom{m+1}{2}} {\ell \brack k} {2n+1 \brack n-2m-2k+\ell} \\
&\qquad
-\sum_{m=-\infty}^{\infty} \sum_{k=0}^{\ell}  (-1)^m q^{\binom{2m+2k-\ell+2}{2}+ \binom{m+1}{2}} {\ell \brack k} {2n+1 \brack n-2m-2k+\ell-1},
\end{align*}
which completes the proof by simplifying.
\end{proof}

Then by taking $\ell=0$ in Corollary \ref{integral1-generalize} and simplifying, we get the result as follows.

\begin{corollary}\label{integral1}
We have
\begin{align}
&\frac{(q)_\infty}{2\pi} \int_{0}^{\pi} (-qe^{i\theta}, -qe^{-i\theta})_n \ (e^{2i\theta}, e^{-2i\theta})_\infty \ d\theta \\
&\qquad\qquad\qquad\qquad\qquad
=\sum_{m=-\lfloor \frac{n+1}{2} \rfloor}^{\lfloor \frac{n}{2} \rfloor} (-1)^m q^{m(5m+3)/2} {2n+1 \brack n-2m}.\nonumber
\end{align}
\end{corollary}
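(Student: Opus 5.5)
The plan is to specialize Corollary~\ref{integral1-generalize} to $\ell=0$, so that $H_0(x|q)=1$ and the $k$-sum has only the term $k=0$. The right-hand side then becomes
\begin{align*}
\sum_{m=-\infty}^{\infty} (-1)^m q^{\binom{2m+1}{2}+\binom{m+1}{2}} \left({2n+1 \brack n-2m} - q^{2m+1}{2n+1 \brack n-2m-1}\right).
\end{align*}
Since $\binom{2m+1}{2}+\binom{m+1}{2}=m(2m+1)+m(m+1)/2=m(5m+3)/2$, the first part is already the sum in the statement. The remaining work is to show that the second part reproduces the first, and to relate the integral over $[-\pi,\pi]$ to the one over $[0,\pi]$.

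For the second part, I will use the symmetry ${2n+1 \brack n-2m-1}={2n+1 \brack n+2m+2}$ and reindex by $m'=-m-1$, which gives ${2n+1 \brack n-2m'}$. A direct check shows
\begin{align*}
\frac{m'(5m'+3)}{2}=\frac{5m^2+7m+2}{2}=\frac{m(5m+3)}{2}+2m+1,
\end{align*}
so the power of $q$ matches exactly. Moreover $(-1)^{m'}=-(-1)^m$, so the minus sign in front of the second part becomes a plus. Hence the second sum equals the first, and the full-circle integral equals twice $\sum_m (-1)^m q^{m(5m+3)/2}{2n+1 \brack n-2m}$.

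Next I will observe that the integrand $(-qe^{i\theta},-qe^{-i\theta})_n(e^{2i\theta},e^{-2i\theta})_\infty$ is invariant under $\theta\mapsto-\theta$. Therefore
\begin{align*}
\frac{1}{2\pi}\int_{-\pi}^{\pi}(\cdot)\,d\theta=2\cdot\frac{1}{2\pi}\int_0^{\pi}(\cdot)\,d\theta,
\end{align*}
and this factor of $2$ cancels the doubling found above. Finally, the binomial ${2n+1 \brack n-2m}$ is nonzero exactly when $0\le n-2m\le 2n+1$, that is, when $-\lfloor\frac{n+1}{2}\rfloor\le m\le\lfloor\frac n2\rfloor$. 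This truncates the bilateral sum to the stated range.

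The main obstacle is bookkeeping rather than any new idea. The reflection $m\mapsto -m-1$ must produce a genuine doubling, not a cancellation, and the resulting factor $2$ must be matched correctly against the change of integration range from $[-\pi,\pi]$ to $[0,\pi]$. I will sanity-check both points at $n=0$: there the left side should equal $\frac{(q)_\infty}{2\pi}\int_0^\pi (e^{2i\theta},e^{-2i\theta})_\infty\,d\theta=1$ by the orthogonality relation \eqref{Hermite-orth}, and the right side reduces to the single term $m=0$, which is $1$.
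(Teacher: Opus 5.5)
Your proposal is correct and follows essentially the same route as the paper. You specialize Corollary~\ref{integral1-generalize} to $\ell=0$, reflect $m\mapsto -m-1$ in the second term so that it duplicates the first, and use the evenness of the integrand to pass from $[-\pi,\pi]$ to $[0,\pi]$; your explicit checks of the exponent identity, the symmetry of the Gaussian coefficient, and the truncation of the summation range fill in details the paper leaves implicit.
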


\begin{proof}
Taking $\ell=0$ in Corollary \ref{integral1-generalize}, and by noting $H_0 (x|q)=1$, it reduces to
\begin{align*}
&\frac{(q)_\infty}{2\pi} \int_{-\pi}^{\pi} (-qe^{i\theta}, -qe^{-i\theta})_n (e^{2i\theta}, e^{-2i\theta})_\infty \ d\theta \\
&\quad =\sum_{m=-\infty}^{\infty} (-1)^m q^{\binom{2m+1}{2}+\binom{m+1}{2}} \left({2n+1 \brack n-2m}- q^{2m+1} {2n+1 \brack n-2m-1} \right).
\end{align*}
Letting $m\mapsto -m-1$ in the second term on the right hand side of the above identity, and noting the  integrand on the left hand side is an even function, the result is proved.
\end{proof}

Note that by further taking $n \rightarrow \infty$ on the right hand side of Corollary \ref{integral1} , it leads to the right hand side of the second Rogers--Ramanujan identity \eqref{RR2}.

\begin{lemma}\label{integral2-generalize}
We have
\begin{align}
&\frac{(q)_\infty}{2\pi} \int_{-\pi}^{\pi} H_\ell (x|q) \ (-qe^{i\theta}, -qe^{-i\theta}; q^2)_n \ (e^{2i\theta}, e^{-2i\theta})_\infty \ d\theta \label{integral2-generalize-sub} \\
&\quad
=\sum_{m=-\infty}^{\infty} \sum_{k=0}^{\ell}  (-1)^m q^{(2m+2k-\ell)^2+ \binom{m+1}{2}} {\ell \brack k}  \nonumber \\
&\qquad
\cdot \left({2n \brack n-2m-2k+\ell}_{q^2}
-q^{4(2m+2k-\ell+1)} {2n \brack n-2m-2k+\ell-2}_{q^2} \right).\nonumber
\end{align}
\end{lemma}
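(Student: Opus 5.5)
The plan mirrors the proof of Lemma \ref{integral1-generalize-2hermite}, with base $q^2$ in the finite product. First I expand $(-qe^{i\theta},-qe^{-i\theta};q^2)_n$ as a Laurent polynomial in $e^{i\theta}$ via the $q$-binomial theorem \eqref{q-binomial-thm} with base $q^2$. The needed rewriting is
\begin{align*}
(-qe^{-i\theta};q^2)_n=q^{n^2}e^{-in\theta}(-q^{1-2n}e^{i\theta};q^2)_n ,
\end{align*}
so that
\begin{align*}
(-qe^{i\theta},-qe^{-i\theta};q^2)_n=q^{n^2}e^{-in\theta}(-q^{1-2n}e^{i\theta};q^2)_{2n}
=\sum_{j=0}^{2n}{2n\brack j}_{q^2}q^{n^2+j^2-2nj}e^{i(j-n)\theta}.
\end{align*}
The exponent is $q^{(j-n)^2}$, since $q^{2\binom{j}{2}}q^{(1-2n)j}=q^{j^2-2nj}$. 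Unlike the base-$q$ case, the product has no odd length, so no factor $1/(1+e^{i\theta})$ appears. This explains why the final answer has a difference of two binomials instead of a single rational factor.

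Next I handle the weight. I write $(e^{2i\theta},e^{-2i\theta})_\infty=(1-e^{-2i\theta})(e^{2i\theta},qe^{-2i\theta})_\infty$ and expand the second factor by Jacobi's triple product \eqref{JTP}. With $z=e^{2i\theta}$ this gives
\begin{align*}
(q)_\infty(e^{2i\theta},qe^{-2i\theta})_\infty=\sum_{m}(-1)^m q^{\binom{m}{2}}e^{2im\theta}.
\end{align*}
Alternatively I can use the form with $(qe^{2i\theta},e^{-2i\theta})_\infty$, choosing whichever matches the stated exponent $\binom{m+1}{2}$; after replacing $m$ by $-m$, $q^{\binom{-m}{2}}=q^{\binom{m+1}{2}}$. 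I then insert $H_\ell(x|q)=\sum_k{\ell\brack k}e^{-i(\ell-2k)\theta}$ from \eqref{q-hermite} and extract the constant term with \eqref{integral-value}.

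The factor $(1-e^{\pm 2i\theta})$ produces two constraints. The first is $j-n+2k-\ell+2m=0$, i.e.\ $j=n-2m-2k+\ell$. The second is the same condition shifted by $2$, i.e.\ $j=n-2m-2k+\ell-2$ (or $+2$, depending on sign conventions). This is exactly the index shift seen in the second binomial ${2n\brack n-2m-2k+\ell-2}_{q^2}$.

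For the first term, $(j-n)^2=(2m+2k-\ell)^2$, which together with $q^{\binom{m+1}{2}}$ and the sign $(-1)^m$ gives the first summand directly. For the second term, $(j-n)^2=(2m+2k-\ell+2)^2=(2m+2k-\ell)^2+4(2m+2k-\ell+1)$. This produces precisely the factor $q^{4(2m+2k-\ell+1)}$ with a minus sign from the weight factor.

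The main obstacle is bookkeeping: choosing the orientation of the triple product (which factor carries $q$) and the sign of the shift in $(1-e^{\pm2i\theta})$ so that the $m$-exponent is $\binom{m+1}{2}$ and the shift is $-2$ rather than $+2$. If the direct choice gives the mirrored form, I will reindex $m\mapsto -m$ or $m\mapsto -m-1$ in that term. Since $H_\ell$ is invariant under $\theta\mapsto-\theta$, I can also use the symmetry $k\mapsto \ell-k$ together with ${2n\brack j}={2n\brack 2n-j}$ to bring it into the stated shape. The infinite sums converge absolutely, so interchanging sum and integral is justified.
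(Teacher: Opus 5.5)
Your proposal is correct and is essentially the paper's proof. The paper likewise expands $(-qe^{i\theta},-qe^{-i\theta};q^2)_n=\sum_{j=0}^{2n}{2n\brack j}_{q^2}q^{(j-n)^2}e^{i(j-n)\theta}$ (from its identity for $(z,q/z)_n$ with $q\mapsto q^2$, $z\mapsto -qe^{i\theta}$), expands the weight by Jacobi's triple product, inserts $H_\ell$, and extracts constant terms. The paper's orientation $(1-e^{2i\theta})(qe^{2i\theta},e^{-2i\theta})_\infty$, with $m\mapsto -m$, gives the stated form directly. Your other orientation also works after $m\mapsto -m$, $k\mapsto \ell-k$ and ${2n\brack j}_{q^2}={2n\brack 2n-j}_{q^2}$, exactly as you indicated.
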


\begin{proof}
It is easy to see that
\begin{align*}
(z,q/z)_n =(-1)^n q^{n+1\choose 2} z^{-n} (zq^{-n};q)_{2n}.
\end{align*}
By using the $q$-binomial theorem \eqref{q-binomial-thm}, we obtain that
\begin{align}
(z,q/z)_n =\sum_{j=0}^{2n} (-1)^{j+n} z^{j-n} q^{\binom{n+1}{2}+\binom{j}{2}-nj} {2n \brack j}.\label{basic2}
\end{align}
Taking $q\mapsto q^2, z\mapsto -qe^{i\theta}$ in \eqref{basic2}, and substituting it into the left hand side of \eqref{integral2-generalize-sub}, then following the similar procedures as given in the proof of Corollary \ref{integral1-generalize}, we obtain the result.
\end{proof}

Now by taking $\ell=0$ in Corollary \ref{integral2-generalize}, it leads to the following identity.

\begin{corollary}\label{integral2}
We have
\begin{align}
&\frac{(q)_\infty}{2\pi} \int_{0}^{\pi} (-qe^{i\theta}, -qe^{-i\theta}; q^2)_n \ (e^{2i\theta}, e^{-2i\theta})_\infty \ d\theta \\
&\qquad\qquad\qquad\qquad\qquad
=\sum_{m=-\lfloor \frac{n}{2} \rfloor}^{\lfloor \frac{n}{2} \rfloor} (-1)^m q^{m(9m+1)/2} {2n \brack n-2m}_{q^2}.\nonumber
\end{align}
\end{corollary}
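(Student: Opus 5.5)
The plan is to specialize Lemma~\ref{integral2-generalize} to $\ell=0$, exactly as Corollary~\ref{integral1} was obtained from Corollary~\ref{integral1-generalize}. With $\ell=0$ we have $H_0(x|q)=1$, and the sum over $k$ reduces to the single term $k=0$ with ${0\brack 0}=1$. The right-hand side of \eqref{integral2-generalize-sub} then becomes
\begin{align*}
\sum_{m=-\infty}^{\infty} (-1)^m q^{4m^2+\binom{m+1}{2}} \left({2n \brack n-2m}_{q^2} - q^{4(2m+1)} {2n \brack n-2m-2}_{q^2}\right).
\end{align*}
Since $4m^2+\binom{m+1}{2}=m(9m+1)/2$, the first part is already the desired summand.

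Next I handle the second part by the substitution $m\mapsto -m-1$. Under it the binomial becomes ${2n\brack n+2m}_{q^2}$, which equals ${2n\brack n-2m}_{q^2}$ by the symmetry of Gaussian coefficients. The sign $(-1)^m$ becomes $-(-1)^m$. For the exponent, the three pieces transform as follows:
\begin{itemize}
\item $4m^2\mapsto 4(m+1)^2$;
\item $\binom{m+1}{2}\mapsto\binom{-m}{2}=\binom{m+1}{2}$;
\item $4(2m+1)\mapsto -4(2m+1)$.
\end{itemize}
Their total is $4(m+1)^2-4(2m+1)+\binom{m+1}{2}=4m^2+\binom{m+1}{2}$, so the exponent is unchanged. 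Combined with the sign flip and the leading minus, the second part therefore equals the first. Hence the full integral over $[-\pi,\pi]$ equals
\begin{align*}
2\sum_{m}(-1)^m q^{m(9m+1)/2}{2n\brack n-2m}_{q^2}.
\end{align*}

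Finally, the integrand $(-qe^{i\theta},-qe^{-i\theta};q^2)_n(e^{2i\theta},e^{-2i\theta})_\infty$ is invariant under $\theta\mapsto-\theta$. The integral over $[-\pi,\pi]$ is therefore twice the integral over $[0,\pi]$, and the factor $2$ cancels. The binomial ${2n\brack n-2m}_{q^2}$ is nonzero exactly when $0\le n-2m\le 2n$, i.e.\ $|m|\le n/2$, which gives the stated range $-\lfloor n/2\rfloor\le m\le\lfloor n/2\rfloor$.

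The only delicate step is the exponent bookkeeping under $m\mapsto -m-1$, in particular confirming that $\binom{-m}{2}=\binom{m+1}{2}$ and that the $q^{4(2m+1)}$ factor exactly absorbs the shift in $4m^2$. I would verify this first, and also check the case $n=0$ directly.
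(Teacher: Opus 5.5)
Your proposal is correct and takes the paper's route. You set $\ell=0$ in Lemma~\ref{integral2-generalize} and fold the second sum onto the first via $m\mapsto -m-1$; your checks $\binom{-m}{2}=\binom{m+1}{2}$ and $4(m+1)^2-4(2m+1)=4m^2$ are right. You then halve using the evenness of the integrand, exactly as in the proof of Corollary~\ref{integral1}.
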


\begin{lemma}\label{integral3-generalize-a}
We have
\begin{align}
&\frac{(q)_\infty}{2\pi} \int_{-\pi}^{\pi} H_\ell(x|q)\ (-e^{i\theta}, -e^{-i\theta};q^a)_n \ (e^{2i\theta}, e^{-2i\theta})_\infty \ d\theta \\
&\ =\sum_{m=-\infty}^{\infty}\sum_{k=0}^{\ell}  (-1)^m q^{a\binom{2m+2k-\ell}{2}+ \binom{m+1}{2}} {\ell \brack k} \nonumber\\
&\quad
\cdot \Bigg({2n-1 \brack n-2m-2k+\ell}_{q^a}
+q^{a(2m+2k-\ell)}{2n-1 \brack n-2m-2k+\ell-1}_{q^a} -q^{2a(2m+2k-\ell)+a} \nonumber\\
&\qquad\qquad
\cdot {2n-1 \brack n-2m-2k+\ell-2}_{q^a} -q^{3a(2m+2k-\ell+1)} {2n-1 \brack n-2m-2k+\ell-3}_{q^a} \Bigg). \nonumber
\end{align}
\end{lemma}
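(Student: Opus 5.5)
The plan follows the proofs of Lemma \ref{integral1-generalize-2hermite} and Lemma \ref{integral2-generalize}: expand $(-e^{i\theta},-e^{-i\theta};q^a)_n$ as a finite Laurent polynomial in $e^{i\theta}$, expand $H_\ell(x|q)$ by \eqref{q-hermite}, and expand the weight by Jacobi's triple product. The constant term is then extracted with \eqref{integral-value}.

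For the first factor, write
\[
(-e^{i\theta},-e^{-i\theta};q^a)_n=(1+e^{-i\theta})\,(-e^{i\theta},-q^ae^{-i\theta};q^a)_{n-1}(1+q^{a(n-1)}e^{i\theta})^{0}\cdots .
\]
Cleaner is to split off the two $j=0$ factors and use \eqref{basic2} with $q\mapsto q^a$ and $z=-e^{i\theta}$:
\[
(-e^{i\theta},-q^ae^{-i\theta};q^a)_{n}=\sum_{j=0}^{2n}{2n\brack j}_{q^a}q^{a\binom{n+1}{2}+a\binom{j}{2}-anj}e^{i(j-n)\theta}.
\]
We have
\[
(-e^{i\theta},-e^{-i\theta};q^a)_n=\frac{1+e^{-i\theta}}{1+q^{an}e^{-i\theta}}\,(-e^{i\theta},-q^ae^{-i\theta};q^a)_n,
\]
and the denominator is awkward. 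I would therefore instead mimic the proof of Lemma \ref{integral1-generalize-2hermite}. Write
\[
(-e^{i\theta},-e^{-i\theta};q^a)_n=(1+e^{i\theta})(1+e^{-i\theta})\,(-q^ae^{i\theta},-q^ae^{-i\theta};q^a)_{n-1}.
\]
Then apply the first display in that proof, with $q\mapsto q^a$ and $n\mapsto n-1$:
\[
(-q^ae^{i\theta},-q^ae^{-i\theta};q^a)_{n-1}=\frac{1}{1+e^{i\theta}}\sum_{j=0}^{2n-1}{2n-1\brack j}_{q^a}q^{a\binom{n}{2}+a\binom{j}{2}-a(n-1)j}e^{i(j-n+1)\theta}.
\]
The factor $1+e^{i\theta}$ cancels and leaves $(1+e^{-i\theta})$ times a sum with $q^a$-binomials of top index $2n-1$, which matches the statement.

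Next, write $(e^{2i\theta},e^{-2i\theta})_\infty=(1-e^{-2i\theta})(qe^{2i\theta},e^{-2i\theta})_\infty$. The product $(1+e^{-i\theta})(1-e^{-2i\theta})$ expands to
\[
1+e^{-i\theta}-e^{-2i\theta}-e^{-3i\theta},
\]
and these four terms account for the four $q$-binomials in the statement. Expand $(qe^{2i\theta},e^{-2i\theta})_\infty/(q)_\infty$ by \eqref{JTP} with $z=e^{-2i\theta}$, which gives $\sum_m(-1)^mq^{\binom{m}{2}}e^{-2mi\theta}$; after reindexing $m\mapsto -m$ this becomes $\sum_m(-1)^mq^{\binom{m+1}{2}}e^{2mi\theta}$. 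The $q$-Hermite polynomial contributes $\sum_k{\ell\brack k}e^{-i(\ell-2k)\theta}$. Integrating with \eqref{integral-value} forces the total exponent to vanish. For the term $e^{-ri\theta}$ with $r\in\{0,1,2,3\}$, this gives $j=n-1-2m-2k+\ell+r$.

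It remains to rewrite each surviving term. Put $N=2m+2k-\ell$. For the $r=0$ term, use the symmetry ${2n-1\brack j}={2n-1\brack 2n-1-j}$ to write the coefficient as ${2n-1\brack n-N}$. For $r=1,2,3$, the lower index is $n-N-r$ in the same way. The powers of $q$ then need to be simplified: $a\binom{n}{2}+a\binom{j}{2}-a(n-1)j$ with $j=n-1-N+r$ should reduce to $a\binom{N-r+1}{2}$ up to bookkeeping. I expect this to give $a\binom{N}{2}$ times the respective extra factors $1$, $q^{aN}$, $q^{2aN+a}$ and $q^{3a(N+1)}$ claimed in the statement.

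The main obstacle is this last exponent bookkeeping. The signs, the choice between $j$ and $2n-1-j$, and the direction of the triple product reindexing must all be chosen consistently so that the prefactor comes out exactly as $q^{a\binom{N}{2}+\binom{m+1}{2}}$. I would fix the reindexing in the first term and then verify the other three by shifting $r$ in a single general formula.
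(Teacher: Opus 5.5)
Your overall route is the paper's. Your Laurent expansion $(-e^{i\theta},-e^{-i\theta};q^a)_n=(1+e^{-i\theta})\sum_{j=0}^{2n-1}{2n-1\brack j}_{q^a}q^{a\binom{n}{2}+a\binom{j}{2}-a(n-1)j}e^{i(j-n+1)\theta}$ is exactly \eqref{basic3} with $z=e^{i\theta}$, since $(1+e^{-i\theta})e^{i\theta}=1+e^{i\theta}$.

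The step that fails is the factorization of the weight. Since $(e^{2i\theta};q)_\infty=(1-e^{2i\theta})(qe^{2i\theta};q)_\infty$, the correct identity is $(e^{2i\theta},e^{-2i\theta})_\infty=(1-e^{2i\theta})(qe^{2i\theta},e^{-2i\theta})_\infty$. Your version with $1-e^{-2i\theta}$ differs from it by the factor $-e^{-2i\theta}$. So the ``constant term'' you extract is really minus the coefficient of $e^{2i\theta}$ in the true integrand. (Also, \eqref{JTP} gives $(q)_\infty(qe^{2i\theta},e^{-2i\theta})_\infty=\sum_m(-1)^mq^{\binom{m}{2}}e^{-2mi\theta}$, a product with $(q)_\infty$, not a quotient.)

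This is why your bookkeeping cannot close. From $j=n-1-N+r$ you do get the power $a\binom{N+1-r}{2}$. But the symmetry of the $q$-binomial turns ${2n-1\brack j}_{q^a}$ into ${2n-1\brack n+N-r}_{q^a}$, not ${2n-1\brack n-N-r}_{q^a}$ as you claim. Reaching the stated shape would require reflecting $N\mapsto-N$ via $(m,k)\mapsto(-m,\ell-k)$. That changes $q^{\binom{m+1}{2}}$ into $q^{\binom{m}{2}}$, giving a different and false identity. Concretely, take $a=1$, $n=1$, $\ell=0$:
\begin{itemize}
\item The left side is $4$: the integrand is $2+2\cos\theta$ times $(q)_\infty(e^{2i\theta},e^{-2i\theta})_\infty$, which has only even frequencies and constant term $2$.
\item The stated right side is $2+2=4$, from $m=0$ and $m=-1$.
\item Your setup produces $2+2q$.
\end{itemize}

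The repair is to pull out $1-e^{2i\theta}$ instead. Then $(1+e^{-i\theta})e^{i(j-n+1)\theta}(1-e^{2i\theta})=(1+e^{i\theta}-e^{2i\theta}-e^{3i\theta})e^{i(j-n)\theta}$. With $N=2m+2k-\ell$, the piece $e^{is\theta}$ ($s=0,1,2,3$, signs $+,+,-,-$) survives \eqref{integral-value} only for $j=n-N-s$. So the coefficient is ${2n-1\brack n-N-s}_{q^a}$ directly, with no use of symmetry. The exponent follows from the identity $\binom{n}{2}+\binom{n-t}{2}-(n-1)(n-t)=\binom{t}{2}$ with $t=N+s$. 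This gives the powers $a\binom{N}{2}$, $a\binom{N}{2}+aN$, $a\binom{N}{2}+2aN+a$ and $a\binom{N}{2}+3a(N+1)$ for $s=0,1,2,3$. Combined with $(-1)^mq^{\binom{m+1}{2}}$ from the triple product and ${\ell\brack k}$ from \eqref{q-hermite}, this is exactly the stated right-hand side.
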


\begin{proof}
By using the $q$-binomial theorem \eqref{q-binomial-thm}, we have that
\begin{align}
(-z, -1/z;q^a)_n =(1+z) \sum_{j=0}^{2n-1} q^{a \big( \binom{j}{2} +\binom{n}{2}-j(n-1) \big)} z^{j-n} {2n-1 \brack j}_{q^a}. \label{basic3}
\end{align}
Taking $z=e^{i\theta}$ in \eqref{basic3} and following the similar procedures as given in the proof of  Corollary \ref{integral1-generalize}, the proof is complete.
\end{proof}

When we take $a=1$ in Lemma \ref{integral3-generalize-a}, it reduces to the following result.

\begin{corollary}\label{integral3-generalize}
We have
\begin{align}
&\frac{(q)_\infty}{2\pi} \int_{-\pi}^{\pi} H_\ell(x|q)\ (-e^{i\theta}, -e^{-i\theta})_n \ (e^{2i\theta}, e^{-2i\theta})_\infty \ d\theta \\
&\ =\sum_{m=-\infty}^{\infty}\sum_{k=0}^{\ell}  (-1)^m q^{\binom{2m+2k-\ell}{2}+ \binom{m+1}{2}} {\ell \brack k} \nonumber\\
&\quad
\cdot \Bigg({2n-1 \brack n-2m-2k+\ell}
+q^{2m+2k-\ell}{2n-1 \brack n-2m-2k+\ell-1}
-q^{2(2m+2k-\ell)+1} \nonumber\\
&\qquad\qquad
\cdot {2n-1 \brack n-2m-2k+\ell-2} -q^{3(2m+2k-\ell+1)} {2n-1 \brack n-2m-2k+\ell-3} \Bigg). \nonumber
\end{align}
\end{corollary}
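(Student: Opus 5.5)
This is the specialization $a=1$ of Lemma~\ref{integral3-generalize-a}, and the plan is simply to substitute. When $a=1$, the base $q^a$ becomes $q$, so $(-e^{i\theta},-e^{-i\theta};q^a)_n=(-e^{i\theta},-e^{-i\theta})_n$ and every $q^a$-binomial coefficient becomes an ordinary $q$-binomial coefficient. The exponents then reduce term by term:
\[
q^{a\binom{2m+2k-\ell}{2}}\to q^{\binom{2m+2k-\ell}{2}},\qquad q^{a(2m+2k-\ell)}\to q^{2m+2k-\ell},
\]
\[
q^{2a(2m+2k-\ell)+a}\to q^{2(2m+2k-\ell)+1},\qquad q^{3a(2m+2k-\ell+1)}\to q^{3(2m+2k-\ell+1)}.
\]
These are exactly the four terms in the bracket of Corollary~\ref{integral3-generalize}. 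The factor $q^{\binom{m+1}{2}}$ from the triple product does not involve $a$ and stays unchanged. This gives the statement.

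Since Lemma~\ref{integral3-generalize-a} was only sketched, I would also verify its content in the case $a=1$, following the proof of Lemma~\ref{integral1-generalize-2hermite}.

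\textbf{Expand the $q$-shifted factorials.} Apply \eqref{basic3} with $z=e^{i\theta}$ to write
\[
(-e^{i\theta},-e^{-i\theta})_n=(1+e^{i\theta})\sum_{j}c_j e^{i(j-n)\theta}.
\]

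\textbf{Cancel against the weight.} Using $(e^{2i\theta})_\infty=(1-e^{2i\theta})(qe^{2i\theta})_\infty$ together with $(1+e^{i\theta})(1-e^{2i\theta})=(1-e^{i\theta})(1+e^{i\theta})^2$, the integrand contains
\[
(1+e^{i\theta})^2(1-e^{i\theta})=1+e^{i\theta}-e^{2i\theta}-e^{3i\theta}.
\]
This product is the source of the four terms with signs $+,+,-,-$.

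\textbf{Expand the remaining factors.} Expand $H_\ell$ by \eqref{q-hermite}, and expand $(qe^{2i\theta},e^{-2i\theta},q)_\infty$ by Jacobi's triple product \eqref{JTP} as $\sum_m(-1)^mq^{\binom{m+1}{2}}e^{2im\theta}$, up to the orientation of $m$.

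\textbf{Extract the constant term.} Use \eqref{integral-value} to pick out the constant Fourier coefficient. For each shift $s\in\{0,1,2,3\}$ this forces
\[
j=n-2m-2k+\ell-s.
\]

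The only real work, and the main obstacle, is the exponent bookkeeping. One must check that
\[
\binom{j}{2}+\binom{n}{2}-j(n-1),\qquad j=n-t-s,\quad t=2m+2k-\ell,
\]
equals $\binom{t}{2}$ plus $0$, $t$, $2t+1$ and $3t+3$ for $s=0,1,2,3$ respectively. A direct expansion gives $\binom{t+s+1}{2}-s$ up to a constant, so the check is routine. The sign $(-1)^m$ and the direction of $m$ in the triple product must match the conventions used in the proof of Lemma~\ref{integral1-generalize-2hermite}; replacing $m\mapsto -m$ if necessary fixes this.
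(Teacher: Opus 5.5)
Your proposal is correct and matches the paper, which obtains this corollary simply by setting $a=1$ in Lemma~\ref{integral3-generalize-a}. One small slip in your optional verification: with $j=n-t-s$, the exponent $\binom{j}{2}+\binom{n}{2}-j(n-1)$ equals exactly $\binom{t+s}{2}$, not $\binom{t+s+1}{2}-s$ (which differs from it by $t$, a quantity depending on $m,k,\ell$ rather than a constant); since $\binom{t+s}{2}-\binom{t}{2}=st+\binom{s}{2}$, this gives precisely the offsets $0,t,2t+1,3t+3$ you wanted.
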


By taking $a=2$ in Lemma \ref{integral3-generalize-a}, it gives the following identity.

\begin{corollary}\label{integral4-generalize}
We have
\begin{align}
&\frac{(q)_\infty}{2\pi} \int_{-\pi}^{\pi} H_\ell(x|q)\ (-e^{i\theta}, -e^{-i\theta}; q^2)_n \ (e^{2i\theta}, e^{-2i\theta})_\infty \ d\theta \\
&\ =\sum_{m=-\infty}^{\infty}\sum_{k=0}^{\ell}  (-1)^m q^{2 \binom{2m+2k-\ell}{2}+ \binom{m+1}{2}} {\ell \brack k} \nonumber\\
&\quad
\cdot \Bigg({2n-1 \brack n-2m-2k+\ell}_{q^2} +q^{2(2m+2k-\ell)} {2n-1 \brack n-2m-2k+\ell-1}_{q^2}
-q^{4(2m+2k-\ell)+2}  \nonumber\\
&\qquad\qquad
\cdot {2n-1 \brack n-2m-2k+\ell-2}_{q^2} -q^{6(2m+2k-\ell+1)} {2n-1 \brack n-2m-2k+\ell-3}_{q^2} \Bigg).\nonumber
\end{align}
\end{corollary}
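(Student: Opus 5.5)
This is the special case $a=2$ of Lemma~\ref{integral3-generalize-a}, so the plan is simply to substitute $a=2$ there and check every exponent. Each $q^a$-binomial becomes a $q^2$-binomial. The prefactor $q^{a\binom{2m+2k-\ell}{2}}$ becomes $q^{2\binom{2m+2k-\ell}{2}}$. The four correction powers become
\[
q^{2(2m+2k-\ell)},\qquad q^{4(2m+2k-\ell)+2},\qquad q^{6(2m+2k-\ell+1)},
\]
which are exactly those in the statement. Formally the corollary then follows at once.

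Since Lemma~\ref{integral3-generalize-a} was proved only in outline, I would also re-derive it for $a=2$ to be sure the exponents are right. The argument runs in four steps.

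\emph{Step 1: expand the product.} Write $(-z,-1/z;q^2)_n=(1+z)\,q^{-2\binom{n}{2}}z^{-n}\cdots$, more precisely
\[
(-z,-1/z;q^2)_n=(1+z)\sum_{j=0}^{2n-1}q^{2(\binom{j}{2}+\binom{n}{2}-j(n-1))}z^{j-n}{2n-1\brack j}_{q^2}.
\]
To get this, factor $(-1/z;q^2)_n=z^{-n}q^{2\binom n2}(-zq^{-2(n-1)};q^2)_n$ and combine it with $(-z;q^2)_n$. This gives $(1+z)$ times $(-zq^{2-2n};q^2)_{2n-1}$. Then apply \eqref{q-binomial-thm} with base $q^2$.

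\emph{Step 2: expand the weight and the Hermite polynomial.} Using \eqref{JTP} with base $q$, write
\[
(e^{2i\theta},e^{-2i\theta})_\infty=(1-e^{-2i\theta})(qe^{2i\theta},e^{-2i\theta})_\infty .
\]
This equals $(1-e^{-2i\theta})\sum_m(-1)^mq^{\binom{m+1}{2}}e^{2mi\theta}/(q)_\infty$, where the sign and shift are to be fixed when checking. Expand $H_\ell$ by \eqref{q-hermite} as $\sum_k{\ell\brack k}e^{i(2k-\ell)\theta}$, using evenness to flip the sign of the exponent if convenient.

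\emph{Step 3: integrate.} The integrand becomes a product of the four factors $(1+e^{i\theta})$, $(1-e^{-2i\theta})$, a monomial in $e^{i\theta}$, and the coefficients above. By \eqref{integral-value}, the integral picks out the constant term, which forces $j-n+2m+2k-\ell$ to lie in $\{0,-1,2,1\}$ according to which of the four monomials in $(1+e^{i\theta})(1-e^{-2i\theta})=1+e^{i\theta}-e^{-2i\theta}-e^{-i\theta}$ is used. This yields four binomials with lower indices $n-2m-2k+\ell-s$ for $s=0,1,2,3$.

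\emph{Step 4: simplify exponents.} Evaluate $2\bigl(\binom j2+\binom n2-j(n-1)\bigr)$ at $j=n-N-s$, where $N=2m+2k-\ell$. The quadratic terms in $n$ should cancel, leaving $2\binom{N}{2}$ plus a correction linear in $N$ and $s$. Together with $\binom{m+1}{2}$ from Step 2, this should give the stated terms. The main obstacle is this bookkeeping: getting the sign of each of the four terms right, and getting the extra constants $+2$ and the $+1$ inside $6(N+1)$. If the raw constant-term extraction produces mismatched shifts, I would reindex $m\mapsto -m-1$ in the offending term, as in the proof of Corollary~\ref{integral1}, to bring every term to the stated form.
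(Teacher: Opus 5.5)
Your proposal is correct and is exactly the paper's argument: the corollary is obtained by setting $a=2$ in Lemma~\ref{integral3-generalize-a}, and your check of the resulting exponents $2\binom{2m+2k-\ell}{2}$, $2(2m+2k-\ell)$, $4(2m+2k-\ell)+2$ and $6(2m+2k-\ell+1)$ is right. One caution about your optional re-derivation: the correct factorization is $(e^{2i\theta},e^{-2i\theta})_\infty=(1-e^{2i\theta})(qe^{2i\theta},e^{-2i\theta})_\infty$, whereas your version with $1-e^{-2i\theta}$ is off by a factor $-e^{-2i\theta}$, which no reindexing repairs; with the correct factor $(1+e^{i\theta})(1-e^{2i\theta})=1+e^{i\theta}-e^{2i\theta}-e^{3i\theta}$, the constant term forces $j=n-N-s$ with $s=0,1,2,3$ directly (where $N=2m+2k-\ell$), and the identity $\binom{j}{2}+\binom{n}{2}-j(n-1)=\binom{N+s}{2}$ then yields all four stated exponents and signs at once.
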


Setting $\ell=0$ in Corollary \ref{integral4-generalize}, we are led to the following identity.

\begin{corollary}\label{integral4}
We have
\begin{align}
&\frac{(q)_\infty}{2\pi} \int_{0}^{\pi} (-e^{i\theta}, -e^{-i\theta}; q^2)_n \ (e^{2i\theta}, e^{-2i\theta})_\infty \ d\theta \\
&\qquad\qquad\qquad
=\sum_{m=-\lfloor \frac{n-1}{2} \rfloor}^{\lfloor \frac{n}{2} \rfloor} (-1)^m q^{m(9m-5)/2} (1+q^m) {2n-1 \brack n-2m}_{q^2}.\nonumber
\end{align}
\end{corollary}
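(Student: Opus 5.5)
We take $\ell=0$ in Corollary \ref{integral4-generalize}, which is itself the case $a=2$ of Lemma \ref{integral3-generalize-a}, and then simplify. Since $H_0(x|q)=1$, only $k=0$ survives, and the right-hand side becomes
\[
\sum_{m=-\infty}^{\infty}(-1)^m q^{\frac{9m^2-3m}{2}}\Big(A_0+q^{4m}A_1-q^{8m+2}A_2-q^{12m+6}A_3\Big),
\]
where $A_j={2n-1\brack n-2m-j}_{q^2}$. Here we used $2\binom{2m}{2}+\binom{m+1}{2}=\frac{9m^2-3m}{2}$.

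On the left, the integrand $(-e^{i\theta},-e^{-i\theta};q^2)_n(e^{2i\theta},e^{-2i\theta})_\infty$ is even in $\theta$. So the integral over $[-\pi,\pi]$ is twice the integral over $[0,\pi]$. It therefore suffices to show that the four-term sum above equals
\[
2\sum_m(-1)^m q^{m(9m-5)/2}(1+q^m){2n-1\brack n-2m}_{q^2}.
\]

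The plan is to rewrite each of the four pieces as a sum against the single binomial ${2n-1\brack n-2m}_{q^2}$, reindexing $m$ and using the symmetry ${2n-1\brack j}={2n-1\brack 2n-1-j}$.

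\begin{enumerate}
\item The $A_0$ term is already of this form, with exponent $\frac{9m^2-3m}{2}=\frac{m(9m-5)}{2}+m$. It supplies one copy of the $q^m$ part.
\item For the $A_2$ term, shift $m\mapsto m+1$. The binomial becomes ${2n-1\brack n-2m}$, the sign $-(-1)^{m+1}$ becomes $(-1)^m$, and the exponent $\frac{9(m-1)^2-3(m-1)}{2}+8m-6$ simplifies to $\frac{9m^2-5m}{2}$. This supplies one copy of the $q^{m(9m-5)/2}$ part.
\item For the $A_1$ term, the exponent is $\frac{9m^2+5m}{2}$. Rewrite ${2n-1\brack n-2m-1}={2n-1\brack n+2m}$ and send $m\mapsto -m$. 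This gives $(-1)^m q^{m(9m-5)/2}{2n-1\brack n-2m}$, the second copy of that part.
\item For the $A_3$ term, rewrite ${2n-1\brack n-2m-3}={2n-1\brack n+2m+2}$ and send $m\mapsto -m-1$. The sign becomes $(-1)^m$ and the exponent $\frac{9(m+1)^2+3(m+1)}{2}-12m-6$ reduces to $\frac{9m^2-3m}{2}$. This is the second copy of the $q^m$ part.
\end{enumerate}

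Adding the four pieces gives exactly twice the claimed sum, and dividing by $2$ for the evenness of the integrand yields the corollary.

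Finally, the summation range follows because ${2n-1\brack n-2m}_{q^2}$ is nonzero only when $0\le n-2m\le 2n-1$. This is equivalent to $-\lfloor\frac{n-1}{2}\rfloor\le m\le\lfloor\frac n2\rfloor$.

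The only real obstacle is bookkeeping. One must check that each of the shifts in steps 2–4 lands on the same binomial ${2n-1\brack n-2m}_{q^2}$ with the correct sign and exponent. All four reindexings are bijections of $\mathbb Z$, so the sums over all $m$ are preserved without boundary issues.
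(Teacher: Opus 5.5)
Your proposal is correct and follows the route the paper intends: specialize Corollary~\ref{integral4-generalize} at $\ell=0$, use evenness of the integrand to pass to $[0,\pi]$, and fold the three shifted terms back onto ${2n-1\brack n-2m}_{q^2}$ via $m\mapsto m+1$, $m\mapsto -m$, $m\mapsto -m-1$ and the symmetry of the Gaussian coefficient, as in the paper's proof of Corollary~\ref{integral1}; your exponent and sign bookkeeping checks out. The one point to make explicit is that this symmetry (and the expansion \eqref{basic3} behind Lemma~\ref{integral3-generalize-a}) needs $n\ge 1$: at $n=0$ the left side equals $1$ while the sum is empty, so the corollary tacitly assumes $n\ge 1$, which is how the paper uses it (cf.\ the separate leading $1$ in \eqref{Yao-(3.60)-orth-simply}).
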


\section{Double sum Rogers--Ramanujan type identities}

By using the theory of Bailey's transform,  Sun \cite{Sun23} and Yao \cite{Yao25}  obtained several identities involving Chebyshev polynomials. In this section, based on these identities, combining the expansion formula \eqref{U_n-to-Hermite} of  Chebyshev polynomials  with the orthogonality relation of  $q$-Hermite polynomials \eqref{Hermite-orth}, we derive some Rogers--Ramanujan type identities on double sums and also an identity on Appell--Lerch series. Recall that the Appell--Lerch series has the following form
\begin{align*}
\sum_{n=-\infty}^\infty \frac{(-1)^{\ell n} q^{\ell n(n+1)/2} b^n}{1-aq^n},
\end{align*}
which plays an important role in the theory of $q$-series, partitions, and modular forms.

First, we illustrate the detailed procedures by showing the double sum identity \eqref{Yao-(3.1)-orth-simply} of the Rogers--Ramanujan type.

{\noindent \bf Proof of identity  \eqref{Yao-(3.1)-orth-simply}.}
The first step is that from identity \eqref{Yao-(3.1)} due to  Yao \cite[(3.1)]{Yao25},  expanding $U_n(x)$ by using formula \eqref{U_n-to-Hermite} and noting \eqref{triangle fomular}, it turns to be
\begin{align*}
&\sum_{n=0}^{\infty} \frac{q^{n(n+2)} (-qe^{i \theta}, -qe^{-i \theta})_n}{(1 +q^{n+1}) (q)_{2n+1}} \\
&\qquad
=\frac{1}{(q)_{\infty}} \sum_{n=0}^{\infty} (1 -q^{n+1}) q^{n(3n+5)/2} \sum_{j=0}^{\lfloor \frac{n}{2} \rfloor} {n-j \brack j} (-1)^j q^{\binom{j+1}{2}} H_{n-2j} (x|q).
\end{align*}
Next, multiplying both sides by
\begin{align*}
\frac{(q)_\infty}{2\pi} (e^{2i\theta}, e^{-2i\theta})_\infty, 
\end{align*}
and then calculating the integrals from $0$ to $\pi$, we are led to
\begin{align*}
&\sum_{n=0}^{\infty} \frac{q^{n(n+2)}}{(1+q^{n+1}) (q)_{2n+1}} \cdot \frac{(q)_\infty}{2\pi} \int_{0}^{\pi} (-qe^{i \theta}, -qe^{-i \theta})_n (e^{2i\theta}, e^{-2i\theta})_\infty \ d\theta\\
&\ =\frac{1}{(q)_{\infty}} \sum_{n=0}^{\infty} (1 -q^{n+1}) q^{n(3n+5)/2} \sum_{j=0}^{\lfloor \frac{n}{2} \rfloor} {n-j \brack j} (-1)^j q^{\binom{j+1}{2}} \\
&\qquad\qquad\qquad \qquad
\cdot \frac{(q)_\infty}{2\pi} \int_{0}^{\pi} H_{n-2j}(x|q) (e^{2i\theta}, e^{-2i\theta})_\infty \ d\theta.
\end{align*}
Then, for the left hand side, it can be simplified by Corollary \ref{integral1} directly. For the right hand side,  by using the orthogonality relation of  $q$-Hermite polynomials \eqref{Hermite-orth}, it implies that
\begin{align*}
\frac{1}{(q)_\infty} \sum_{n=0}^{\infty} (-1)^j q^{\binom{j+1}{2}+j(6j+5)} (1-q^{2j+1}).
\end{align*}
Finally, by applying  Jacobi's triple product identity \eqref{JTP}, the proof is complete. $\Box$

By using the similar procedures given as above, from the  identities  due to \cite[(3.14),(3.22)]{Yao25}
{\small \begin{align*}
&\sum_{n=0}^{\infty} \frac{q^{\frac{n(n+1)}{2}} (-q)_n \prod_{j=1}^{n} (1 +2xq^j +q^{2j})}{(q)_{2n+1}}
=\frac{(q^2; q^2)_{\infty}}{(q)_{\infty}^2} \sum_{n=0}^{\infty} q^{n^2 + n} \big( U_n(x) - U_{n-1}(x) \big), \\ 
&\sum_{n=0}^{\infty} \frac{q^{n^2} \prod_{j=1}^{n} (1 +2xq^j +q^{2j})}{(q)_{2n+1}}
=\frac{1}{(q)_{\infty}} \sum_{n=0}^{\infty} q^{n(3n+1)/2} (1 + q^{2n+1}) \big(U_n(x) - U_{n-1}(x) \big),   
\end{align*}}
we obtain the following identities of Rogers--Ramanujan type.

\begin{theorem}
We have
\begin{subequations}
\begin{align}
&\sum_{n=0}^{\infty} \sum_{m=-\lfloor \frac{n+1}{2} \rfloor}^{\lfloor \frac{n}{2} \rfloor} \frac{(-1)^m q^{n(n+1)/2 +m(5m+3)/2}} {(q)_n (q;q^2)_{n+1}} {2n+1 \brack n-2m} \label{Yao-(3.14)-orth-simply}\\
&\qquad\qquad\qquad\qquad\qquad\qquad\qquad\quad
=\frac{(-q)_\infty (q^2, q^7, q^9; q^9)_\infty}{(q)_\infty},  \nonumber\\
&\sum_{n=0}^{\infty} \sum_{m=-\lfloor \frac{n+1}{2} \rfloor}^{\lfloor \frac{n}{2} \rfloor} \frac{(-1)^m q^{n^2 +m(5m+3)/2}} {(q)_{2n+1}} {2n+1 \brack n-2m}
\label{Yao-(3.22)-orth-simply} \\
&\qquad\qquad\qquad
=\frac{1}{(q)_\infty} \big( (q^5, q^{8}, q^{13}; q^{13})_\infty +q(q, q^{12}, q^{13}; q^{13})_\infty \big) . \nonumber
\end{align}
\end{subequations}
\end{theorem}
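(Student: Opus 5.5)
We follow the same procedure as in the proof of \eqref{Yao-(3.1)-orth-simply}, starting from the two identities \cite[(3.14),(3.22)]{Yao25}. In each, we replace $\prod_{j=1}^n(1+2xq^j+q^{2j})$ by $(-qe^{i\theta},-qe^{-i\theta})_n$ using \eqref{triangle fomular}. We also expand $U_n(x)-U_{n-1}(x)$ into $q$-Hermite polynomials by applying \eqref{U_n-to-Hermite} to each of $U_n$ and $U_{n-1}$. Then we multiply both sides by $\frac{(q)_\infty}{2\pi}(e^{2i\theta},e^{-2i\theta})_\infty$ and integrate over $[0,\pi]$. Interchanging sums and integrals is justified by absolute convergence for $|q|<1$, since the sums are dominated termwise by $q^{n^2/2}$ times bounded factors.

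On the left-hand side, Corollary~\ref{integral1} evaluates each integral at once. For \eqref{Yao-(3.14)-orth-simply} we use $(q)_{2n+1}=(q)_n(q;q^2)_{n+1}(-q)_n$, which cancels the factor $(-q)_n$ and produces the stated denominator. For \eqref{Yao-(3.22)-orth-simply} nothing further is needed.

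On the right-hand side, the orthogonality relation \eqref{Hermite-orth} with $m=0$ (so $H_0=1$) keeps only the terms in which the Hermite index vanishes. From $U_n$ this forces $n=2j$, contributing $(-1)^jq^{\binom{j+1}{2}}$. From $U_{n-1}$ it forces $n=2j+1$, contributing $(-1)^jq^{\binom{j+1}{2}}$ with weight $-1$ (here ${n-1-j\brack j}={j\brack j}=1$). So $U_n-U_{n-1}$ contributes $(-1)^jq^{\binom{j+1}{2}}$ times $[n=2j]-[n=2j+1]$.

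For \eqref{Yao-(3.14)-orth-simply} the right side becomes
\[
\frac{(q^2;q^2)_\infty}{(q)_\infty^2}\sum_{j\ge0}(-1)^jq^{\binom{j+1}{2}}\big(q^{4j^2+2j}-q^{4j^2+6j+2}\big)
=\frac{(q^2;q^2)_\infty}{(q)_\infty^2}\sum_{j\ge0}(-1)^jq^{(9j^2+5j)/2}(1-q^{4j+2}).
\]
Substituting $j\mapsto -j-1$ in the second piece turns this into the bilateral sum $\sum_{j\in\mathbb Z}(-1)^jq^{(9j^2+5j)/2}$. Jacobi's triple product \eqref{JTP} with $q\mapsto q^9$, $z=q^7$ evaluates this as $(q^2,q^7,q^9;q^9)_\infty$. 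Finally $(q^2;q^2)_\infty/(q)_\infty=(-q)_\infty$ gives the claimed product.

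For \eqref{Yao-(3.22)-orth-simply} the weight is $q^{n(3n+1)/2}(1+q^{2n+1})$. Taking $n=2j$ gives $(-1)^jq^{(13j^2+5j)/2}(1+q^{4j+1})$, and $n=2j+1$ gives $-(-1)^jq^{(13j^2+15j+4)/2}(1+q^{4j+3})$. This yields four unilateral series, with exponents $(13j^2+5j)/2$, $(13j^2+13j+2)/2$, $(13j^2+15j+4)/2$ and $(13j^2+23j+10)/2$. The main obstacle is pairing them correctly into two bilateral theta series. Under $j\mapsto -j-1$, the exponent $(13j^2+5j)/2$ becomes $(13j^2+21j+8)/2$, which does not match any of the others. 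So I would instead pair the first with the third and the second with the fourth.

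Under $j\mapsto -j-1$, $(13j^2+15j+4)/2$ becomes $(13j^2+11j+2)/2$. I would therefore recheck the index conventions, in particular whether $U_{-1}=0$ and how the $n=0$ term enters, and then look for the pairing among the exponents that produces $\sum_{j\in\mathbb Z}(-1)^jq^{(13j^2+3j)/2}=(q^5,q^8,q^{13};q^{13})_\infty$ and $q\sum_{j\in\mathbb Z}(-1)^jq^{(13j^2+11j)/2}=q(q,q^{12},q^{13};q^{13})_\infty$, both by \eqref{JTP}. The signs coming from $(-1)^j$ together with the reflections $j\mapsto-j-1$ must be tracked carefully. Since the exponents are derived quickly, I would verify the final combination by comparing the first few coefficients of the $q$-expansions of both sides.
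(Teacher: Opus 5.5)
\paragraph{Verdict.} Your route is the paper's route: apply the procedure used for \eqref{Yao-(3.1)-orth-simply} to Yao's identities (3.14) and (3.22). Your proof of the first identity is complete and correct. The second identity is left unfinished, and what blocks you is an arithmetic slip, not an index convention. The convention $U_{-1}=0$ is harmless, because $n=2j+1$ is never $0$. The pairing you propose, first with third and second with fourth, is the wrong one.

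\paragraph{The slip.} At $n=2j$ the exponent is
\[
\tbinom{j+1}{2}+j(6j+1)=\tfrac{13j^2+3j}{2},
\]
not $(13j^2+5j)/2$. So the companion term coming from $q^{4j+1}$ has exponent $(13j^2+11j+2)/2$, not $(13j^2+13j+2)/2$. The right-hand side is therefore
\[
\frac{1}{(q)_\infty}\sum_{j\ge 0}(-1)^j\Big(q^{(13j^2+3j)/2}+q^{(13j^2+11j+2)/2}-q^{(13j^2+15j+4)/2}-q^{(13j^2+23j+10)/2}\Big).
\]

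\paragraph{Completing the argument.} Pair the first series with the fourth and the second with the third. Under $j\mapsto -j-1$:
\begin{itemize}
\item $-(-1)^jq^{(13j^2+23j+10)/2}$ becomes $(-1)^jq^{(13j^2+3j)/2}$ for $j\le -1$;
\item $-(-1)^jq^{(13j^2+15j+4)/2}$ becomes $(-1)^jq^{(13j^2+11j+2)/2}$ for $j\le -1$. You had already found this reflected exponent; with the corrected arithmetic it matches the second series exactly.
\end{itemize}
This gives the two bilateral sums
\[
\sum_{j\in\mathbb Z}(-1)^jq^{(13j^2+3j)/2}+q\sum_{j\in\mathbb Z}(-1)^jq^{(13j^2+11j)/2}.
\]
Apply \eqref{JTP} with $q\mapsto q^{13}$, taking $z=q^8$ for the first sum and $z=q^{12}$ for the second. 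The result is $(q^5,q^8,q^{13};q^{13})_\infty+q(q,q^{12},q^{13};q^{13})_\infty$, as claimed. With this correction your argument is complete. A quick check of the first few coefficients confirms it: both sides begin $1+2q+2q^2+4q^3+6q^4+\cdots$.
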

The identity \eqref{Yao-(3.14)-orth-simply} yields a new form of the Andrews--Gordon type identity with $k=5, i=2$, which is also studied by Sang and Shi in \cite[(1.4)]{SS15}.

Based on the identities \cite[(5.1)]{Sun23} and \cite[(3.29),(3.42)]{Yao25}, respectively,
\begin{align*}
&\sum_{n=0}^\infty \frac{q^{n^2} (-q; q^2)_n \prod_{i=1}^n (1 +2xq^{2i-1} +q^{4i-2})}{(q^2; q^2)_{2n}} \\
&\qquad\qquad\qquad\qquad\qquad\qquad
=\frac{(-q; q^2)_{\infty}}{(q^2; q^2)_{\infty}} \sum_{n=0}^\infty q^{2n^2} \big( V_n(x) +V_{n-1}(x) \big), \\ 
&\sum_{n=0}^{\infty} \frac{q^{2n^2+2n} \prod_{j=1}^{n} (1 +2xq^{2j-1} +q^{4j-2})} {(1+q^{2n+1})(q^2; q^2)_{2n}} \nonumber\\
&\qquad\qquad\qquad
=\frac{1}{(q^2; q^2)_{\infty}} \sum_{n=0}^{\infty} (1 -q^{2n+1}) q^{3n^2+2n} \big( U_n(x) +U_{n-1}(x) \big), \\ 
&\sum_{n=0}^{\infty} \frac{q^{n^2+n} (-q^2;q^2)_n \prod_{j=1}^{n} (1 +2xq^{2j-1} +q^{4j-2})}{(1 +q^{2n+1})(q^2;q^2)_{2n}} \\
&\qquad\qquad\qquad
=\frac{(q^4; q^4)_{\infty}}{(q^2; q^2)_{\infty}^2} \sum_{n=0}^{\infty} (1-q^{2n+1}) q^{2n^2+n} \big( U_n(x) +U_{n-1}(x) \big), 
\end{align*}
and by using Corollary \ref{integral2}, we derive the following identities.

\begin{theorem}
We have
\begin{align*}
&\sum_{n=0}^{\infty} \sum_{m=-\lfloor \frac{n}{2} \rfloor}^{\lfloor \frac{n}{2} \rfloor} \frac{(-1)^m q^{n^2+m(9m+1)/2}} {(q; q^2)_n  (q^4; q^4)_n } {2n \brack n-2m}_{q^2} =\frac{(-q; q^2)_\infty (q^8, q^{9}, q^{17}; q^{17})_\infty}{(q^2; q^2)_\infty}, \\
&\sum_{n=0}^{\infty} \sum_{m=-\lfloor \frac{n}{2} \rfloor}^{\lfloor \frac{n}{2} \rfloor} \frac{(-1)^m q^{2n(n+1) +m(9m+1)/2}} {(1+q^{2n+1}) (q^2;q^2)_{2n}} {2n \brack n-2m}_{q^2}\\
&\qquad\qquad\qquad\qquad\quad
=\frac{1}{(q^2;q^2)_\infty} \big( (q^8, q^{17}, q^{25}; q^{25})_\infty -q(q^4, q^{21}, q^{25}; q^{25})_\infty \big),\\
&\sum_{n=0}^{\infty} \sum_{m=-\lfloor \frac{n}{2} \rfloor}^{\lfloor \frac{n}{2} \rfloor} \frac{(-1)^m q^{n(n+1) +m(9m+1)/2}} {(q)_{2n} (-q;q^2)_{n+1}} {2n \brack n-2m}_{q^2} \\
&\qquad\qquad\qquad\qquad
=\frac{(-q^2;q^2)_\infty}{(q^2;q^2)_\infty} \big( (q^6, q^{11}, q^{17}; q^{17})_\infty -q(q^2, q^{15}, q^{17}; q^{17})_\infty \big).
\end{align*}
\end{theorem}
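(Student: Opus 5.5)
Each of the three identities follows the template used for \eqref{Yao-(3.1)-orth-simply}. Start from the corresponding Chebyshev identity: \cite[(5.1)]{Sun23} for the first, and \cite[(3.29),(3.42)]{Yao25} for the second and third. Put $x=\cos\theta$ and use \eqref{triangle fomular} with $q\mapsto q^2$ to rewrite the product as
\[
\prod_{j=1}^{n}(1+2xq^{2j-1}+q^{4j-2})=(-qe^{i\theta},-qe^{-i\theta};q^2)_n .
\]
Multiply both sides by $\frac{(q)_\infty}{2\pi}(e^{2i\theta},e^{-2i\theta})_\infty$ and integrate over $[0,\pi]$. Interchanging sum and integral is harmless, since everything converges absolutely for $|q|<1$. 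On the left, Corollary \ref{integral2} evaluates each integral directly as
\[
\sum_{m}(-1)^m q^{m(9m+1)/2}{2n\brack n-2m}_{q^2}.
\]
What remains is to simplify the prefactors: $(q^2;q^2)_{2n}=(q;q^2)_n\,(q^4;q^4)_n\,(-q;q^2)_n$, so $(-q;q^2)_n/(q^2;q^2)_{2n}=1/\bigl((q;q^2)_n(q^4;q^4)_n\bigr)$. For the third identity, $(-q^2;q^2)_n/\bigl((1+q^{2n+1})(q^2;q^2)_{2n}\bigr)$ has to be matched to $1/\bigl((q)_{2n}(-q;q^2)_{n+1}\bigr)$ via $(q^2;q^2)_{2n}=(q)_{2n}(-q)_{2n}$ and $(-q)_{2n}=(-q;q^2)_n(-q^2;q^2)_n$.

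On the right, rewrite $V_n+V_{n-1}=U_n-U_{n-2}$ using \eqref{V_n}, and leave $U_n+U_{n-1}$ as it is. Then expand each $U_N$ by \eqref{U_n-to-Hermite}. By \eqref{Hermite-orth} with $m=0$, $\frac{(q)_\infty}{2\pi}\int_0^\pi H_{N-2j}(e^{2i\theta},e^{-2i\theta})_\infty\,d\theta=\delta_{N,2j}$, so only $N=2j$ survives. That term contributes $(-1)^jq^{\binom{j+1}{2}}$ from $U_{2j}$, and nothing from odd-index $U$'s. So on the right we keep only those $n$ for which one of the two $U$ indices is even.

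For the second identity, $U_n+U_{n-1}$ gives $(-1)^{j}q^{\binom{j+1}{2}}$ when $n=2j$ and when $n=2j+1$. The right side becomes
\[
\frac{1}{(q^2;q^2)_\infty}\sum_{j\ge0}(-1)^jq^{\binom{j+1}{2}}\Bigl[(1-q^{4j+1})q^{12j^2+4j}+(1-q^{4j+3})q^{3(2j+1)^2+2(2j+1)}\Bigr].
\]
The third identity is handled the same way with exponent $2n^2+n$. For the first, $U_n-U_{n-2}$ at even $n=2j$ gives $(-1)^jq^{\binom{j+1}{2}}-(-1)^{j-1}q^{\binom{j}{2}}$, which multiplies $q^{8j^2}$.

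The main obstacle is the final step. Each resulting one-sided sum of four (or two) monomial families must be folded into bilateral theta series and summed by \eqref{JTP}. The four families should pair up, via $j\mapsto -j-1$ or similar, into two bilateral sums of the form $\sum_{j\in\mathbb{Z}}(-1)^jq^{\frac{25}{2}j^2+cj}$ in the second case (respectively $\frac{17}{2}j^2$ in the first and third). Applying \eqref{JTP} to each, with base $q^{25}$ or $q^{17}$, then gives the two products. I expect the pairing and the sign $-q$ to take some care: I would first check that the quadratic exponents coincide after the reflection and that the linear terms produce exactly the residues $8,17$ and $4,21$ modulo $25$ (respectively $6,11$ and $2,15$ modulo $17$). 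I would verify a few terms of each $q$-expansion to fix the splitting. In the first identity the two pieces should instead combine into a single bilateral sum, yielding $(q^8,q^9,q^{17};q^{17})_\infty$. The prefactors $(-q;q^2)_\infty/(q^2;q^2)_\infty$ and $(q^4;q^4)_\infty/(q^2;q^2)^2_\infty=(-q^2;q^2)_\infty/(q^2;q^2)_\infty$ then carry over unchanged.
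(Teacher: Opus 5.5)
Your proposal is correct and is essentially the paper's own argument. You rewrite the product as $(-qe^{i\theta},-qe^{-i\theta};q^2)_n$ (this is a direct factorization, not \eqref{triangle fomular} with $q\mapsto q^2$), integrate against $\frac{(q)_\infty}{2\pi}(e^{2i\theta},e^{-2i\theta})_\infty$, evaluate the left side by Corollary~\ref{integral2} and the right side by \eqref{U_n-to-Hermite} and \eqref{Hermite-orth}, then fold and apply \eqref{JTP}; the pairings, residues and prefactor simplifications you predict all check out. The only slip is bookkeeping at $j=0$ in the first identity: since $U_{-2}=0$, the $-U_{n-2}$ family runs only over $j\ge 1$, whereas your formula would add a spurious $+1$ at $j=0$, and it is exactly this restriction that lets $j\mapsto -j$ fill in $j\le -1$ to give $\sum_{j\in\mathbb{Z}}(-1)^jq^{(17j^2+j)/2}=(q^8,q^9,q^{17};q^{17})_\infty$.
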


By noting the identity given by Yao in \cite[(3.60)]{Yao25}
\begin{align*}
&1 +\sum_{n=1}^{\infty} \frac{q^{n^2} \prod_{j=0}^{n-1} (1 +2xq^j +q^{2j})}{(q;q)_{2n}}\\
&\qquad\qquad\qquad\qquad
=\frac{1}{(q)_{\infty}} \sum_{n=0}^{\infty} q^{n(3n-1)/2} (1-q^{4n+2}) \big( U_n(x) + U_{n-1}(x) \big), 
\end{align*}
and applying Corollary \ref{integral4}, we obtain the following identity.

\begin{theorem}
We have
\begin{align}\label{Yao-(3.60)-orth-simply}
&1+\sum_{n=1}^{\infty} \sum_{m=-\lfloor \frac{n-1}{2} \rfloor}^{\lfloor \frac{n}{2} \rfloor} \frac{(-1)^m q^{n^2 +m(5m-3)/2} (1+q^m)} {(q)_{2n}} {2n-1 \brack n-2m} \\
&\qquad\qquad\qquad\qquad
=\frac{1}{(q)_\infty} \big( (q^6, q^{7}, q^{13}; q^{13})_\infty +q(q, q^{12}, q^{13}; q^{13})_\infty \big).\nonumber
\end{align}
\end{theorem}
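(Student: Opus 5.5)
We follow the same scheme as in the proof of \eqref{Yao-(3.1)-orth-simply}, starting from Yao's identity \cite[(3.60)]{Yao25}. By \eqref{triangle fomular} with the product starting at $j=0$, we have $\prod_{j=0}^{n-1}(1+2xq^j+q^{2j})=(-e^{i\theta},-e^{-i\theta})_n$. The left side therefore contains the base-$q$ product $(-e^{i\theta},-e^{-i\theta};q)_n$, and the $q$-binomials in the claimed identity are in base $q$. So the evaluation we actually need is Corollary \ref{integral3-generalize} (the case $a=1$ of Lemma \ref{integral3-generalize-a}) with $\ell=0$, not its $q^2$ analogue. We expand $U_n(x)+U_{n-1}(x)$ on the right via \eqref{U_n-to-Hermite}, multiply both sides by $\frac{(q)_\infty}{2\pi}(e^{2i\theta},e^{-2i\theta})_\infty$, and integrate over $[0,\pi]$. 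Termwise integration is justified by absolute convergence, since the factors $q^{n^2}$ and $q^{n(3n-1)/2}$ dominate polynomially bounded integrands. The leading term $1$ on the left contributes $1$, by \eqref{Hermite-orth} with $m=n=0$.

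\textbf{Left side.} Put $\ell=0$, $k=0$ in Corollary \ref{integral3-generalize}. Each of the four terms carries the factor $(-1)^m q^{m(5m-1)/2}$, multiplied respectively by
\[
{2n-1\brack n-2m},\quad q^{2m}{2n-1\brack n-2m-1},\quad -q^{4m+1}{2n-1\brack n-2m-2},\quad -q^{6m+3}{2n-1\brack n-2m-3}.
\]
Substituting $m\mapsto -m-1$ in the third and fourth terms and using ${N\brack k}={N\brack N-k}$, the third term becomes exactly the second and the fourth becomes exactly the first. The $[-\pi,\pi]$ integral is thus twice
\[
\sum_m(-1)^m\Big(q^{m(5m-1)/2}{2n-1\brack n-2m}+q^{m(5m+3)/2}{2n-1\brack n-2m-1}\Big).
\]
Since the integrand is even in $\theta$, the $[0,\pi]$ integral is half of this. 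Finally, $m\mapsto -m$ in the second piece, together with symmetry, turns it into $(-1)^m q^{m(5m-3)/2}{2n-1\brack n-2m}$. The two pieces then combine to $(-1)^m q^{m(5m-3)/2}(1+q^m){2n-1\brack n-2m}$. The constraint $0\le n-2m\le 2n-1$ gives the stated range of $m$.

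\textbf{Right side.} By \eqref{Hermite-orth}, only $H_0$ survives the integration. From $U_n$ this requires $n=2j$, with coefficient $(-1)^jq^{\binom{j+1}{2}}$. From $U_{n-1}$ it requires $n=2j+1$, with the same coefficient. The right side therefore equals
\[
\frac{1}{(q)_\infty}\sum_{j\ge0}(-1)^jq^{\binom{j+1}{2}}\Big(q^{j(6j-1)}(1-q^{8j+2})+q^{(2j+1)(3j+1)}(1-q^{8j+6})\Big).
\]
This simplifies to four quadratic exponents of the form $(13j^2+cj+d)/2$.

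\textbf{Main obstacle.} The remaining step is to regroup these four one-sided sums into two bilateral sums. One pairs $j\mapsto -j-1$ between a term from the first bracket and a term from the second, and then applies Jacobi's triple product \eqref{JTP} with $q\mapsto q^{13}$ to obtain $(q^6,q^7,q^{13};q^{13})_\infty+q(q,q^{12},q^{13};q^{13})_\infty$. I would first check the pairings by matching the $q^0$ and $q^1$ leading terms, and then verify the exponent bookkeeping for each pairing.
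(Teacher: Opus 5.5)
Your proposal is correct and follows the paper's route: Yao's (3.60), the expansion \eqref{U_n-to-Hermite}, orthogonality \eqref{Hermite-orth} on the right, then Jacobi's triple product. You are also right that the needed evaluation is Corollary~\ref{integral3-generalize} with $\ell=0$; the paper's citation of the base-$q^2$ Corollary~\ref{integral4} is a slip, and your $m\mapsto -m-1$ reduction is exactly what the paper does for \eqref{Andrews-(5.1)-orth-simply}. The pairings you left unchecked do work under $j\mapsto -j-1$: the exponent $(13j^2-j)/2$ matches $(13j^2+27j+14)/2$, giving $(q^6,q^7,q^{13};q^{13})_\infty$, and $(13j^2+11j+2)/2$ matches $(13j^2+15j+4)/2$, giving $q(q,q^{12},q^{13};q^{13})_\infty$.
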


Based on \eqref{Sun-(6.2)} which is given by Sun in \cite[(6.2)]{Sun23}, we also derive an identity on Appell--Lerch series.
\begin{theorem}
We have
\begin{align*}
&\sum_{n=0}^{\infty} \sum_{m=-\lfloor \frac{n}{2} \rfloor}^{\lfloor \frac{n}{2} \rfloor} \frac{(-1)^m q^{n(n+1)+m(9m+1)/2} (-1;q^2)_n} {(q^2;q^2)_{2n}} {2n \brack n-2m}_{q^2} \\
&\qquad\qquad\qquad\qquad\qquad\qquad
=\frac{2(-q^2; q^2)_\infty}{(q^2; q^2)_\infty} \sum_{n=-\infty}^{\infty} \frac{(-1)^n q^{n(17n+5)/2}}{1+q^{4n}}.
\end{align*}
\end{theorem}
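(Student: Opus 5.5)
The plan follows the same route as the proof of \eqref{Yao-(3.1)-orth-simply}, starting from Sun's identity \eqref{Sun-(6.2)}. We rewrite the Chebyshev side in terms of $U_n$, integrate both sides against the $q$-Hermite weight, and then simplify the resulting theta-type sum into an Appell--Lerch sum.

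\textbf{Step 1 (reduce to $U_n$).} By \eqref{V_n} we have $V_n(x)+V_{n-1}(x)=U_n(x)-U_{n-2}(x)$, using the convention $U_k=0$ for $k<0$. In the left side of \eqref{Sun-(6.2)}, set $x=\cos\theta$. Then, as in \eqref{triangle fomular} with $q\mapsto q^2$,
\begin{align*}
\prod_{i=1}^n(1+2xq^{2i-1}+q^{4i-2})=(-qe^{i\theta},-qe^{-i\theta};q^2)_n.
\end{align*}

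\textbf{Step 2 (integrate).} Multiply both sides by $\frac{(q)_\infty}{2\pi}(e^{2i\theta},e^{-2i\theta})_\infty$ and integrate over $[0,\pi]$.

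On the left, Corollary \ref{integral2} applies termwise. It produces exactly the double sum
\begin{align*}
\sum_{n\ge0}\frac{q^{n(n+1)}(-1;q^2)_n}{(q^2;q^2)_{2n}}\sum_{m}(-1)^mq^{m(9m+1)/2}{2n\brack n-2m}_{q^2}.
\end{align*}

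On the right, expand each $U_n$ by \eqref{U_n-to-Hermite} and use \eqref{Hermite-orth} with $H_0=1$. Only the $H_0$ component survives, so the integral of $U_n$ is $(-1)^kq^{\binom{k+1}{2}}$ when $n=2k$, and $0$ when $n$ is odd. Hence the integral of $V_n+V_{n-1}$ vanishes for odd $n$, and for $n=2k$ it equals
\begin{align*}
(-1)^kq^{\binom{k+1}{2}}+(-1)^kq^{\binom{k}{2}}=(-1)^kq^{k(k-1)/2}(1+q^k)\quad (k\ge1),
\end{align*}
while it equals $1$ for $k=0$.

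\textbf{Step 3 (simplify the right side).} The right side becomes
\begin{align*}
\frac{(-q^2;q^2)_\infty}{(q^2;q^2)_\infty}\sum_{k\ge0}\frac{(-1;q^2)_{2k}}{(-q^2;q^2)_{2k}}\,q^{8k^2+2k}\,c_k,
\end{align*}
where $c_k$ is the value from Step 2. For $k\ge1$ we have $\frac{(-1;q^2)_{2k}}{(-q^2;q^2)_{2k}}=\frac{2}{1+q^{4k}}$. So the $k$-th term is
\begin{align*}
\frac{2(-1)^k\bigl(q^{(17k^2+3k)/2}+q^{(17k^2+5k)/2}\bigr)}{1+q^{4k}}.
\end{align*}

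Apply $k\mapsto -k$ to the first piece. Using
\begin{align*}
\frac{q^{(17k^2-3k)/2}}{1+q^{-4k}}=\frac{q^{(17k^2+5k)/2}}{1+q^{4k}},
\end{align*}
the first piece becomes the sum of $2(-1)^kq^{k(17k+5)/2}/(1+q^{4k})$ over $k\le-1$. The second piece is the same expression summed over $k\ge1$.

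The $k=0$ term equals $1=2\cdot\frac{1}{1+q^0}$, which fills the missing index. Together these give the bilateral Appell--Lerch sum $2\sum_{n\in\mathbb{Z}}\frac{(-1)^nq^{n(17n+5)/2}}{1+q^{4n}}$, as claimed.

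\textbf{Main obstacle.} The step needing care is justifying termwise integration, i.e. interchanging sum and integral. This holds by absolute convergence for $|q|<1$, as in the earlier proofs. The other point to check carefully is the bookkeeping at $k=0$ and with the convention $U_{-1}=U_{-2}=0$.
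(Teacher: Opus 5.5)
Your proposal is correct and follows essentially the same route as the paper: substitute \eqref{V_n} into \eqref{Sun-(6.2)}, integrate against $\frac{(q)_\infty}{2\pi}(e^{2i\theta},e^{-2i\theta})_\infty$, and use Corollary~\ref{integral2} on the left and \eqref{U_n-to-Hermite} with \eqref{Hermite-orth} on the right. The only difference is bookkeeping: the paper first rewrites the right side as $\frac{2(-q^2;q^2)_\infty}{(q^2;q^2)_\infty}\sum_{n\ge0} q^{n(2n+1)}\bigl(\frac{1}{1+q^{2n}}-\frac{q^{8n+10}}{1+q^{2n+4}}\bigr)U_n(x)$, whereas you integrate $V_n+V_{n-1}=U_n-U_{n-2}$ directly, and your reflection $k\mapsto -k$ together with the $k=0$ check spells out the Appell--Lerch simplification that the paper leaves implicit.
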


\begin{proof}
By substituting \eqref{V_n} into  identity \eqref{Sun-(6.2)}, it yields that
\begin{align*}
&\sum_{n=0}^\infty \frac{q^{n(n+1)} (-1;q^2)_n (-qe^{i\theta}, -qe^{-i\theta};q^2)_n} {(q^2;q^2)_{2n}}\\
&\qquad \qquad =\frac{2(-q^2;q^2)_\infty}{(q^2;q^2)_{\infty}} \sum_{n=0}^\infty q^{n(2n+1)} \Big( \frac{1}{1+q^{2n}} -\frac{q^{8n+10}}{1+q^{2n+4}} \Big) U_n(x).
\end{align*}
Then following the similar procedures as given in the proof of identity \eqref{Yao-(3.1)-orth-simply}, and applying Corollary \ref{integral2}, we complete the proof.
\end{proof}

\section{Triple-sum identities with $\ell$}

In this section, by transforming  the terms on $V_n(x)$ and $W_n(x)$ into $U_n(x)$ and applying the orthogonality relation of $q$-Hermite polynomials \eqref{Hermite-orth}, we obtain triple-sum identities with a free variable $\ell$. When $\ell=0$, it will reduce to double-sum Rogers--Ramanujan type identities.

\begin{theorem}\label{Andrews-(4.2)-orth}
We have
\begin{align}
&\sum_{n=0}^{\infty} \frac{q^{n(n+1)}}{(q)_{2n+1}} \sum_{m=-\infty}^{\infty} \sum_{k=0}^{\ell} (-1)^m q^{\binom{2m+2k-\ell+1}{2}+ \binom{m+1}{2}} {\ell \brack k} \\
&\quad
\cdot \left({2n+1 \brack n-2m-2k+\ell}
-q^{2m+2k-\ell+1} {2n+1 \brack n-2m-2k+\ell-1} \right) \nonumber \\
&=\frac{2}{(q)_\infty} \sum_{j=0}^{\infty} (-1)^j q^{3\binom{2j+\ell+1}{2} + \binom{j+1}{2}} (1-q^{6j+3\ell+3}) (q^{j+1})_{\ell}. \nonumber
\end{align}
\end{theorem}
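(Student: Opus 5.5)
The plan is to run the same machinery as in the proof of \eqref{Yao-(3.1)-orth-simply}, starting from Andrews' identity \eqref{Andrews-(4.2)}. Two changes are needed. First, the integrand is weighted by $H_\ell(x|q)$ instead of $1$. Second, the integral is taken over $[-\pi,\pi]$, which is where the factor $2$ on the right comes from.

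\textbf{Step 1 (rewrite in $U$ and $H$).} Write $V_n(x)=U_n(x)-U_{n-1}(x)$ by \eqref{V_n}; this also holds at $n=0$ with the convention $U_{-1}=0$. Rewrite the product on the left of \eqref{Andrews-(4.2)} as $(-qe^{i\theta},-qe^{-i\theta})_n$ via \eqref{triangle fomular}. Then expand each $U_n$ and $U_{n-1}$ by \eqref{U_n-to-Hermite}, so that the right-hand side becomes a combination of the $H_{n-2j}(x|q)$ and $H_{n-1-2j}(x|q)$.

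\textbf{Step 2 (integrate).} Multiply both sides by $\frac{(q)_\infty}{2\pi}H_\ell(x|q)(e^{2i\theta},e^{-2i\theta})_\infty$ and integrate over $[-\pi,\pi]$, interchanging sum and integral. This is justified by absolute and uniform convergence for $|q|<1$: the factors $q^{n(n+1)}$ and $q^{3\binom{n+1}{2}}$ dominate the polynomially bounded Hermite and product factors.

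\textbf{Step 3 (left-hand side).} Each $n$-th term on the left is evaluated directly by Corollary \ref{integral1-generalize}. This gives exactly the triple sum in the statement.

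\textbf{Step 4 (right-hand side).} Since the integrand is even in $\theta$, \eqref{Hermite-orth} gives
\[
\frac{(q)_\infty}{2\pi}\int_{-\pi}^{\pi}H_\ell H_r(e^{2i\theta},e^{-2i\theta})_\infty\,d\theta=2\delta_{\ell r}(q)_\ell .
\]
\begin{itemize}
\item From the $U_n$ part, only $n=\ell+2j$ survives. Its coefficient is ${\ell+j\brack j}(-1)^jq^{\binom{j+1}{2}}$, and multiplying by $(q)_\ell$ gives $(q)_{\ell+j}/(q)_j=(q^{j+1})_\ell$. The contribution is therefore $q^{3\binom{\ell+2j+1}{2}}$ times this.
\item From the $-U_{n-1}$ part, only $n=\ell+2j+1$ survives, with the same coefficient and the factor $q^{3\binom{\ell+2j+2}{2}}$.
\end{itemize}
Combining the two contributions gives
\[
\frac{2}{(q)_\infty}\sum_{j\ge0}(-1)^jq^{\binom{j+1}{2}}(q^{j+1})_\ell\Bigl(q^{3\binom{2j+\ell+1}{2}}-q^{3\binom{2j+\ell+2}{2}}\Bigr).
\]
Since $3\binom{2j+\ell+2}{2}-3\binom{2j+\ell+1}{2}=3(2j+\ell+1)$, this equals the stated right-hand side.

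The main obstacle is the index bookkeeping in Step 4. One must check that the shift $n\mapsto n-1$ in the $U_{n-1}$ terms does not produce boundary terms, which is guaranteed by $U_{-1}=0$. One must also check that for each fixed $\ell$ the surviving indices are exactly $n-2j=\ell$ with $0\le j\le\lfloor n/2\rfloor$. Both checks are routine once the sum is reorganized by $j$.
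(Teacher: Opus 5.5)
Your proposal is correct and follows the paper's proof. You rewrite \eqref{Andrews-(4.2)} in terms of $U_n(x)$ via \eqref{V_n}, expand with \eqref{U_n-to-Hermite}, integrate against $\frac{(q)_\infty}{2\pi}H_\ell(x|q)(e^{2i\theta},e^{-2i\theta})_\infty$ over $[-\pi,\pi]$, and evaluate the left side by Corollary~\ref{integral1-generalize} and the right side by \eqref{Hermite-orth}, with the factor $2$ coming from evenness. The only cosmetic difference is that the paper first telescopes to $q^{3\binom{n+1}{2}}(1-q^{3n+3})U_n(x)$ before expanding, whereas you keep the $U_n$ and $-U_{n-1}$ contributions separate and combine them after applying orthogonality, which produces the same factor $1-q^{6j+3\ell+3}$.
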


\begin{proof}
Rewriting Andrews' identity \eqref{Andrews-(4.2)} on $V_n(x)$ by using  \eqref{V_n} and \eqref{triangle fomular} into  $U_n(x)$, then expanding it by  \eqref{U_n-to-Hermite}, we have that
\begin{align}
&\sum_{n=0}^{\infty} \frac{q^{n^2+n} (-qe^{i\theta}, -qe^{-i\theta})_n}{(q)_{2n+1}} \label{Andrews-(4.2)-transform} \\
& \quad
=\frac{1}{(q)_\infty} \sum_{n=0}^{\infty} q^{3\binom{n+1}{2}} (1-q^{3n+3})\ U_n(x) \nonumber\\
& \quad
=\frac{1}{(q)_\infty} \sum_{n=0}^{\infty} q^{3\binom{n+1}{2}} (1-q^{3n+3}) \sum_{j=0}^{\lfloor \frac{n}{2} \rfloor} {n-j \brack j} (-1)^j q^{\binom{j+1}{2}} H_{n-2j}(x|q).\nonumber
\end{align}
Multiplying both sides by
\begin{align}
\frac{(q)_\infty}{2\pi} H_{\ell}(x|q) (e^{2i\theta}, e^{-2i\theta})_\infty, \label{multiply-term}
\end{align}
and integrating the identity from $-\pi$ to $\pi$, then the right side of \eqref{Andrews-(4.2)-transform} becomes
\begin{align*}
&\frac{1}{(q)_\infty} \sum_{n=0}^{\infty} q^{3\binom{n+1}{2}} (1-q^{3n+3}) \sum_{j=0}^{\lfloor \frac{n}{2} \rfloor} {n-j \brack j} (-1)^j q^{\binom{j+1}{2}} \\[2mm]
&\quad \cdot \frac{(q)_\infty}{2\pi} \int_{-\pi}^{\pi} H_{n-2j}(x|q) H_\ell(x|q) \ (e^{2i\theta}, e^{-2i\theta})_\infty \ d\theta.
\end{align*}
Note that the above integrand is an even function, it becomes
\begin{align*}
&\frac{2}{(q)_\infty} \sum_{n=0}^{\infty} q^{3\binom{n+1}{2}} (1-q^{3n+3}) \sum_{j=0}^{\lfloor \frac{n}{2} \rfloor} {n-j \brack j} (-1)^j q^{\binom{j+1}{2}} \\[2mm]
&\quad \cdot \frac{(q)_\infty}{2\pi} \int_{0}^{\pi} H_{n-2j}(x|q) H_\ell(x|q) \ (e^{2i\theta}, e^{-2i\theta})_\infty \ d\theta.
\end{align*}
By the orthogonality relation \eqref{Hermite-orth}, it can be further simplified to be
\begin{align*}
&\frac{2}{(q)_\infty} \sum_{j=0}^{\infty} (-1)^j q^{3\binom{2j+\ell+1}{2} + \binom{j+1}{2}} (1-q^{6j+3\ell+3}) \frac{(q)_{j+\ell}}{(q)_j}.
\end{align*}
For the left hand side of \eqref{Andrews-(4.2)-transform}, it turns to be
\begin{align*}
\sum_{n=0}^{\infty} \frac{q^{n^2+n}}{(q)_{2n+1}} \cdot \frac{(q)_\infty}{2 \pi} \int_{-\pi}^{\pi} H_\ell(x|q) (-qe^{i\theta}, -qe^{-i\theta})_n (e^{2i \theta}, e^{-2i \theta} )_\infty \ d\theta,
\end{align*}
which directly completes the proof by using Corollary \ref{integral1-generalize}.
\end{proof}

When $\ell=0$ in the above theorem, we obtain the following double-sum identity of Rogers--Ramanujan type.

\begin{corollary}\label{Andrews-(4.2)-orth-t=0}
We have
\begin{align} \label{Andrews-4.2-i}
\sum_{n=0}^{\infty} \sum_{m=-\lfloor \frac{n+1}{2} \rfloor}^{\lfloor \frac{n}{2} \rfloor} \frac{(-1)^m \ q^{n(n+1)+m(5m+3)/2}}{(q)_{2n+1}} {2n+1 \brack n-2m} =\frac{(q^3, q^{10}, q^{13}; q^{13})_\infty}{(q)_\infty}.
\end{align}
\end{corollary}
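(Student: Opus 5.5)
The plan is to specialize Theorem~\ref{Andrews-(4.2)-orth} to $\ell=0$, simplify the left side with Corollary~\ref{integral1}, and evaluate the right side with Jacobi's triple product identity \eqref{JTP}. With $\ell=0$ we have ${\ell \brack k}=1$, the sum over $k$ reduces to $k=0$, and $(q^{j+1})_0=1$.

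For the left side, the inner sum over $m$ with $\ell=0$ is exactly the expression in the proof of Corollary~\ref{integral1}, namely
\[
\sum_{m}(-1)^m q^{\binom{2m+1}{2}+\binom{m+1}{2}}\Big({2n+1\brack n-2m}-q^{2m+1}{2n+1\brack n-2m-1}\Big).
\]
This expression equals the integral over $[-\pi,\pi]$. By evenness of the integrand it is twice the integral over $[0,\pi]$, so Corollary~\ref{integral1} turns it into
\[
2\sum_{m=-\lfloor \frac{n+1}{2}\rfloor}^{\lfloor \frac n2\rfloor}(-1)^m q^{m(5m+3)/2}{2n+1\brack n-2m}.
\]
The range of $m$ is exactly where $0\le n-2m\le 2n+1$. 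Hence the left side of the theorem equals twice the left side of \eqref{Andrews-4.2-i}.

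For the right side, setting $\ell=0$ gives
\[
\frac{2}{(q)_\infty}\sum_{j\ge0}(-1)^j q^{3\binom{2j+1}{2}+\binom{j+1}{2}}\,(1-q^{6j+3}).
\]
Here $3\binom{2j+1}{2}+\binom{j+1}{2}=(13j^2+7j)/2$. We split the factor $1-q^{6j+3}$ into two sums. In the second sum the exponent is $(13j^2+7j)/2+6j+3=(13j^2+19j+6)/2$. Under $j\mapsto -j-1$ this exponent becomes $(13j^2+7j)/2$, and the sign $(-1)^{-j-1}=-(-1)^j$ cancels the minus sign. The two one-sided sums therefore combine into the bilateral sum
\[
\sum_{j=-\infty}^{\infty}(-1)^j q^{(13j^2+7j)/2}.
\]
Applying \eqref{JTP} with $q\mapsto q^{13}$ and $z=q^{10}$, and using $13\binom{j}{2}+10j=(13j^2+7j)/2$, this sum equals $(q^{10},q^3,q^{13};q^{13})_\infty$. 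Cancelling the common factor $2$ on both sides gives \eqref{Andrews-4.2-i}.

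The only delicate step is the bookkeeping of the factor $2$ and the finite range of $m$ on the left side. Both are settled by Corollary~\ref{integral1} together with the evenness of the integrand. The reindexing $j\mapsto -j-1$ is a routine check.
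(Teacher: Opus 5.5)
Your proof is correct and follows the paper's route. You specialize Theorem~\ref{Andrews-(4.2)-orth} to $\ell=0$, fold the right side into a bilateral sum via $j\mapsto -j-1$, apply \eqref{JTP} with $q\mapsto q^{13}$, $z=q^{10}$, and cancel the factor $2$. The only cosmetic difference is on the left: you invoke Corollary~\ref{integral1} together with evenness of the integrand, whereas the paper performs the substitution $m\mapsto -m-1$ in the second term directly. That substitution is exactly how Corollary~\ref{integral1} is proved, so the two arguments coincide.
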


\begin{proof}
By taking $\ell=0$ in Theorem \ref{Andrews-(4.2)-orth}, it becomes
\begin{align}
&\sum_{n=0}^{\infty} \frac{q^{n(n+1)}}{(q)_{2n+1}} \sum_{m=-\infty}^{\infty} (-1)^m q^{\binom{2m+1}{2}+ \binom{m+1}{2}}  \left({2n+1 \brack n-2m}- q^{2m+1} {2n+1 \brack n-2m-1} \right)  \nonumber \\
&\ =\frac{2}{(q)_\infty} \sum_{j=0}^{\infty} (-1)^j q^{3\binom{2j+1}{2} + \binom{j+1}{2}} (1-q^{6j+3}). \label{Andrews-(4.2)-orth-step1}
\end{align}
For the right hand side of  \eqref{Andrews-(4.2)-orth-step1}, by taking $j \mapsto -j-1$ into the second term, and applying  Jacobi's triple product identity \eqref{JTP}, it implies that
\begin{align*}
\frac{2}{(q)_\infty} \sum_{j=-\infty}^{\infty} (-1)^j q^{3\binom{2j+1}{2} + \binom{j+1}{2}} =\frac{2(q^3, q^{10}, q^{13}; q^{13})_\infty}{(q)_\infty},
\end{align*}
For the left hand side, letting $m \rightarrow -m-1$ in the second term, it turns to be
\begin{align*}
2\sum_{n=0}^{\infty} \sum_{m=-\lfloor \frac{n+1}{2} \rfloor}^{\lfloor \frac{n}{2} \rfloor} \frac{(-1)^m \ q^{n(n+1)+m(5m+3)/2}}{(q)_{n-2m} \ (q)_{n+2m+1}},
\end{align*}
which completes the proof.
\end{proof}

Note that \eqref{Andrews-4.2-i}
gives a double-sum expression  for Andrews--Gordon identity \eqref{Andrews-Gordon-identity} with $k=6$ and $i=3$, in which it takes the form of a quintuple sum.

From the identity due to Sun  \cite[(1.6)]{Sun23}
\begin{align} \label{Sun-1.6}
\sum_{n=0}^{\infty} \frac{q^{2n^2} \prod_{j=1}^n (1+2xq^{2j-1}+q^{4j-2})}{(q^2;q^2)_{2n}}
=\frac{1}{(q^2;q^2)_\infty} \sum_{n=0}^{\infty} q^{3n^2} \big( V_n(x)+V_{n-1}(x) \big),
\end{align}
we obtain the following triple sum identity with a free variable $\ell$.

\begin{theorem}\label{Sun-(1.6)-orth}
We have
\begin{align*}
&\sum_{n=0}^{\infty} \frac{q^{2n^2}}{(q^2; q^2)_{2n}} \sum_{m=-\infty}^{\infty} \sum_{k=0}^{\ell} (-1)^m q^{(2m+2k-\ell)^2+ \binom{m+1}{2}} {\ell \brack k} \nonumber\\
&\quad
\cdot \left({2n \brack n-2m-2k+\ell}_{q^2}
-q^{4(2m+2k-\ell+1)} {2n \brack n-2m-2k+\ell-2}_{q^2} \right) \\
&=\frac{2}{(q^2; q^2)_\infty} \sum_{j=0}^{\infty} (-1)^j q^{\binom{j+1}{2} + 3(2j+\ell)^2} (1-q^{12(2j+\ell+1)} ) (q^{j+1})_{\ell}.
\end{align*}
\end{theorem}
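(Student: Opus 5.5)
The proof follows the template of Theorem \ref{Andrews-(4.2)-orth}. First, rewrite Sun's identity \eqref{Sun-1.6} entirely in terms of $U_n(x)$. By \eqref{V_n}, $V_n+V_{n-1}=U_n-U_{n-2}$; for $n=0$ this reads $V_0=U_0$ under the convention $U_{-1}=U_{-2}=0$. Shifting the index in the $U_{n-2}$ part gives
\begin{align*}
\sum_{n=0}^{\infty} q^{3n^2}\big(V_n(x)+V_{n-1}(x)\big)=\sum_{n=0}^{\infty} q^{3n^2}\big(1-q^{12n+12}\big)U_n(x),
\end{align*}
since $3(n+2)^2-3n^2=12n+12$. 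On the left side, use \eqref{triangle fomular} with $q\mapsto q^2$ to write $\prod_{j=1}^n(1+2xq^{2j-1}+q^{4j-2})=(-qe^{i\theta},-qe^{-i\theta};q^2)_n$. Then expand each $U_n(x)$ on the right by \eqref{U_n-to-Hermite}.

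Next, multiply both sides by the factor \eqref{multiply-term}, $\frac{(q)_\infty}{2\pi}H_\ell(x|q)(e^{2i\theta},e^{-2i\theta})_\infty$, and integrate over $[-\pi,\pi]$. Interchanging sum and integral is justified by absolute convergence for $|q|<1$. On the left, each term becomes $\frac{q^{2n^2}}{(q^2;q^2)_{2n}}$ times the integral evaluated in Lemma \ref{integral2-generalize}, which is exactly the triple sum on the left of the theorem. On the right, the integrand $H_{n-2j}H_\ell(e^{2i\theta},e^{-2i\theta})_\infty$ is even in $\theta$, so the integral over $[-\pi,\pi]$ is twice the integral over $[0,\pi]$. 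By \eqref{Hermite-orth} that half-integral equals $\delta_{n-2j,\ell}(q)_\ell$.

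Only the terms with $n=2j+\ell$ survive. They contribute
\begin{align*}
\frac{2}{(q^2;q^2)_\infty}\sum_{j\ge0} q^{3(2j+\ell)^2}\big(1-q^{12(2j+\ell+1)}\big)(-1)^jq^{\binom{j+1}{2}}{j+\ell\brack j}(q)_\ell .
\end{align*}
Since ${j+\ell\brack j}(q)_\ell=(q)_{j+\ell}/(q)_j=(q^{j+1})_\ell$, this is the stated right-hand side.

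The only delicate point is the boundary bookkeeping in the $V\to U$ conversion. The convention $V_{-1}=0$ combined with \eqref{V_n}, which holds only for $n\ge1$, must be checked at $n=0$ and $n=1$ so that no stray $U$ terms appear. With $U_{-1}=U_{-2}=0$ it works: at $n=1$, $V_1+V_0=U_1-U_0+1=U_1$, consistent with $U_1-U_{-1}$. Everything else is a direct citation of Lemma \ref{integral2-generalize} and \eqref{Hermite-orth}.
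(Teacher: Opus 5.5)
Your proposal is correct and follows essentially the same route as the paper: rewrite $V_n+V_{n-1}=U_n-U_{n-2}$ to obtain $\sum_{n\ge0} q^{3n^2}(1-q^{12n+12})U_n(x)$, expand by \eqref{U_n-to-Hermite}, integrate against \eqref{multiply-term} over $[-\pi,\pi]$, and then apply Lemma \ref{integral2-generalize} on the left and \eqref{Hermite-orth} (with the factor $2$ from evenness) on the right. One small wording fix: $\prod_{j=1}^n(1+2xq^{2j-1}+q^{4j-2})=(-qe^{i\theta},-qe^{-i\theta};q^2)_n$ follows by factoring each quadratic directly, whereas literally substituting $q\mapsto q^2$ in \eqref{triangle fomular} would give the factors with $q^{2j}$ rather than $q^{2j-1}$.
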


\begin{proof}
Substituting \eqref{V_n} into identity \eqref{Sun-1.6}, we obtain
\begin{align*}
\sum_{n=0}^{\infty} \frac{q^{2n^2} (-qe^{i\theta}, -qe^{-i\theta}; q^2)_n}{(q^2;q^2)_{2n}} &=\frac{1}{(q^2;q^2)_\infty} \sum_{n=0}^{\infty} q^{3n^2} \big( U_n(x)- U_{n-2}(x) \big)\\
&=\frac{1}{(q^2;q^2)_\infty} \sum_{n=0}^{\infty} q^{3n^2} (1-q^{12n+12}) U_n(x).
\end{align*}
Replacing $U_n(x)$ by \eqref{U_n-to-Hermite}, it gives
\begin{align*}
&\sum_{n=0}^{\infty} \frac{q^{2n^2} (-qe^{i\theta}, -qe^{-i\theta}; q^2)_n}{(q^2;q^2)_{2n}}\\
&\qquad
=\frac{1}{(q^2;q^2)_\infty} \sum_{n=0}^{\infty} q^{3n^2} (1-q^{12n+12}) \sum_{j=0}^{\lfloor \frac{n}{2} \rfloor} {n-j \brack j} (-1)^j q^{\binom{j+1}{2}} H_{n-2j} (x|q).
\end{align*}
Then by multiplying both hand sides by \eqref{multiply-term}, applying the orthogonality relation \eqref{Hermite-orth} and Lemma \ref{integral2-generalize}, the proof is completed.
\end{proof}

When $\ell=0$ in Theorem \ref{Sun-(1.6)-orth}, it reduces to the following double-sum identity of Rogers--Ramanujan type.

\begin{corollary}
We have
\begin{align*}
\sum_{n=0}^{\infty} \sum_{m=-\lfloor \frac{n}{2} \rfloor}^{\lfloor \frac{n}{2} \rfloor} \frac{(-1)^m \ q^{2n^2+m(9m+1)/2}} {(q^2; q^2)_{2n}} {2n \brack n-2m}_{q^2} =\frac{(q^{12}, q^{13}, q^{25}; q^{25})_\infty}{(q^2; q^2)_\infty}.
\end{align*}
\end{corollary}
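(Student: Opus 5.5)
Set $\ell=0$ in Theorem \ref{Sun-(1.6)-orth} and simplify both sides separately, following the proof of Corollary \ref{Andrews-(4.2)-orth-t=0}. With $\ell=0$ the only term in the inner $k$-sum is $k=0$, and ${0\brack 0}=1$, $(q^{j+1})_0=1$. The identity becomes
\begin{align*}
&\sum_{n=0}^{\infty} \frac{q^{2n^2}}{(q^2;q^2)_{2n}} \sum_{m=-\infty}^{\infty} (-1)^m q^{4m^2+\binom{m+1}{2}} \left({2n \brack n-2m}_{q^2} - q^{8m+4}{2n \brack n-2m-2}_{q^2}\right)\\
&\qquad =\frac{2}{(q^2;q^2)_\infty}\sum_{j=0}^\infty (-1)^j q^{\binom{j+1}{2}+12j^2}\big(1-q^{24j+12}\big).
\end{align*}

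\textbf{Right-hand side.} Substitute $j\mapsto -j-1$ in the second part. The exponent becomes
\[
\binom{j+1}{2}+12j^2+24j+12=\binom{-j}{2}+12(j+1)^2 ,
\]
which is exactly the first-part exponent evaluated at $-j-1$. The sign also changes by $(-1)$, so the two parts merge into the bilateral sum
\[
\frac{2}{(q^2;q^2)_\infty}\sum_{j=-\infty}^{\infty}(-1)^j q^{(25j^2+j)/2}.
\]
Apply Jacobi's triple product \eqref{JTP} with $q\mapsto q^{25}$ and $z=q^{12}$. Since $25\binom{j}{2}+12j=(25j^2-j)/2$, we use $j\mapsto -j$ to match the sign of the linear term. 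This gives
\[
\frac{2(q^{12},q^{13},q^{25};q^{25})_\infty}{(q^2;q^2)_\infty}.
\]

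\textbf{Left-hand side.} Substitute $m\mapsto -m-1$ in the second term. The binomial coefficient becomes ${2n\brack n+2m}_{q^2}={2n\brack n-2m}_{q^2}$. For the exponent, note that $\binom{-m}{2}=\binom{m+1}{2}$, so
\[
4(m+1)^2+\binom{m+1}{2}-8m-4=4m^2+\binom{m+1}{2}.
\]
The sign flips, which cancels the minus sign in front. Hence the second term equals the first, and the left side is
\[
2\sum_n \frac{q^{2n^2}}{(q^2;q^2)_{2n}}\sum_m (-1)^m q^{4m^2+m(m+1)/2}{2n\brack n-2m}_{q^2}.
\]
The exponent $4m^2+m(m+1)/2$ equals $m(9m+1)/2$, matching the statement. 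The binomial coefficient is nonzero only for $0\le n-2m\le 2n$, i.e. $-\lfloor n/2\rfloor\le m\le\lfloor n/2\rfloor$, which gives the stated summation range.

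Cancelling the common factor $2$ from both sides yields the corollary. The only delicate points are the exponent bookkeeping in the two reflections $m\mapsto-m-1$ and $j\mapsto-j-1$ and the sign convention in the triple product. There is no genuine obstacle, and a quick check of the low-order terms (e.g. the constant term $1$ on both sides) would confirm the normalization.
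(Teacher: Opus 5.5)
Your proposal is correct and follows the paper's route. The paper only says the corollary follows by setting $\ell=0$ in Theorem~\ref{Sun-(1.6)-orth}; you supply the details in the same way as the proof of Corollary~\ref{Andrews-(4.2)-orth-t=0}, folding the second terms via $m\mapsto -m-1$ and $j\mapsto -j-1$ and applying Jacobi's triple product with base $q^{25}$, and your exponent bookkeeping, signs and summation range all check out.
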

Moreover, we refer to Chu and Wang's result  \cite[Cor. 38]{Cw12} with $l=6$,  which is a quintuple sum representation for the right hand side of the above identity.

From Andrews' identity \eqref{Andrews-(5.1)} on $W_n(x)$, we obtain the following result with a free variable $\ell$.

\begin{theorem}\label{Andrews-(5.1)-orth}
We have
\begin{align*}
&\sum_{n=0}^{\infty} \frac{q^{n(n+1)}}{(q)_{2n}} \sum_{m=-\infty}^{\infty} \sum_{k=0}^{\ell} (-1)^m q^{\binom{2m+2k-\ell}{2}+ \binom{m+1}{2}} {\ell \brack k}  \\
&\quad
\cdot \Bigg( {2n-1 \brack n-2m-2k+\ell} +q^{2m+2k-\ell} {2n-1 \brack n-2m-2k+\ell-1} -q^{2(2m+2k-\ell)+1}  \\
&\qquad\qquad
\cdot {2n-1 \brack n-2m-2k+\ell-2} -q^{3(2m+2k-\ell+1)} {2n-1 \brack n-2m-2k+\ell-3}\Bigg)\\
&=\frac{2}{(q)_\infty} \sum_{j=0}^{\infty} (-1)^j q^{\binom{j+1}{2} + (2j+\ell)(6j+3\ell+1)/2} (1-q^{4j+2\ell+1} +q^{6j+3\ell+2} -q^{10j+5\ell+5}) (q^{j+1})_{\ell}.
\end{align*}
\end{theorem}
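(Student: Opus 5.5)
Following the proofs of Theorems \ref{Andrews-(4.2)-orth} and \ref{Sun-(1.6)-orth}, we start from Andrews' identity \eqref{Andrews-(5.1)}. We first rewrite its left-hand side with \eqref{triangle fomular}. Since the product there runs from $j=0$, we have
\[
\prod_{j=0}^{n-1}(1+2xq^j+q^{2j})=(-e^{i\theta},-e^{-i\theta})_n ,
\]
so the left side becomes $\sum_{n\ge0} q^{n^2+n}(-e^{i\theta},-e^{-i\theta})_n/(q)_{2n}$. This is exactly the factor handled by Corollary \ref{integral3-generalize}.

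On the right side we use \eqref{W_n}, $W_n=U_n+U_{n-1}$, and collect the coefficient of each $U_n(x)$. Writing $c_n=q^{n(3n+1)/2}(1-q^{2n+1})$, the right side becomes
\[
\frac{1}{(q)_\infty}\sum_{n\ge0}(c_n+c_{n+1})U_n(x).
\]
Expanding gives
\[
c_n+c_{n+1}=q^{n(3n+1)/2}\bigl(1-q^{2n+1}+q^{3n+2}-q^{5n+5}\bigr),
\]
which we check from $(n+1)(3n+4)/2-n(3n+1)/2=3n+2$ and $3n+2+2n+3=5n+5$. This matches the four-term factor in the statement once $n=2j+\ell$: then $2n+1=4j+2\ell+1$, $3n+2=6j+3\ell+2$ and $5n+5=10j+5\ell+5$.

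Next we expand $U_n(x)$ by \eqref{U_n-to-Hermite} as $\sum_j {n-j\brack j}(-1)^jq^{\binom{j+1}{2}}H_{n-2j}(x|q)$. We multiply both sides by \eqref{multiply-term} and integrate over $[-\pi,\pi]$, interchanging sum and integral as in the earlier proofs.

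For the right side, the integrand $H_{n-2j}H_\ell(e^{2i\theta},e^{-2i\theta})_\infty$ is even in $\theta$. So the integral over $[-\pi,\pi]$ is twice that over $[0,\pi]$, and \eqref{Hermite-orth} forces $n-2j=\ell$ with value $(q)_\ell$. Substituting $n=2j+\ell$ gives ${n-j\brack j}(q)_\ell={j+\ell\brack j}(q)_\ell=(q^{j+1})_\ell$. The exponent becomes $\binom{j+1}{2}+(2j+\ell)(6j+3\ell+1)/2$, which is the claimed right side with the prefactor $2/(q)_\infty$.

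For the left side, each term gives $\frac{q^{n(n+1)}}{(q)_{2n}}$ times the integral in Corollary \ref{integral3-generalize}. Substituting that corollary reproduces the stated triple sum verbatim.

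The main obstacle is the $n=0$ term on the left. There $(-e^{i\theta},-e^{-i\theta})_0=1$, while the formula \eqref{basic3} behind Corollary \ref{integral3-generalize} involves ${2n-1\brack\cdot}$ with $2n-1=-1$. We must check that, under the paper's convention that $q$-binomials with out-of-range indices vanish, the four-term bracket at $n=0$ still evaluates to $\frac{(q)_\infty}{2\pi}\int H_\ell(e^{2i\theta},e^{-2i\theta})_\infty\,d\theta=2\delta_{\ell 0}$. If it does not, the $n=0$ term should be treated separately, as in the ``$1+\sum_{n\ge1}$'' form of \eqref{Yao-(3.60)-orth-simply}.

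A secondary point is that the rearrangement from $W$'s to $U$'s must respect the convention $U_{-1}=0$. This holds, since $c_0$ multiplies $W_0=U_0$ only.
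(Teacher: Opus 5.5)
Your proposal follows the paper's proof. You rewrite \eqref{Andrews-(5.1)} through \eqref{W_n} as $\frac{1}{(q)_\infty}\sum_{n}q^{n(3n+1)/2}(1-q^{2n+1}+q^{3n+2}-q^{5n+5})U_n(x)$, expand with \eqref{U_n-to-Hermite}, multiply by \eqref{multiply-term}, and apply \eqref{Hermite-orth} on the right and Corollary~\ref{integral3-generalize} on the left. Your coefficient and exponent checks are all correct.

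The $n=0$ point you left open has a definite answer, and the paper's proof passes over it. Identity \eqref{basic3} holds only for $n\ge 1$: at $n=0$ its left side is $1$ and its right side is an empty sum. So under the vanishing convention every ${-1\brack\cdot}$ is $0$ and the stated $n=0$ summand is $0$. The actual integral, however, is $\frac{(q)_\infty}{2\pi}\int_{-\pi}^{\pi}H_\ell(x|q)(e^{2i\theta},e^{-2i\theta})_\infty\,d\theta=2\delta_{\ell0}$.

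Hence the theorem is correct as written for $\ell\ge1$. For $\ell=0$, the $n=0$ summand must be read as $2$. Comparing constant terms confirms this: the right side starts with $2$, while every $n\ge1$ summand is $O(q^2)$. This is exactly why the paper's $\ell=0$ corollary is written as $1+\sum_{n\ge1}$.

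Alternatively, you can adopt the extended coefficient ${-1\brack j}=(-1)^jq^{-\binom{j+1}{2}}$ for $j\ge0$. Then the four-term bracket at $n=0$ collapses to $q^{-\binom{s}{2}}(\delta_{s,0}-\delta_{s,-2})$ with $s=2m+2k-\ell$. Nothing survives for odd $\ell$. For $\ell=2L$, the $q$-binomial theorem \eqref{q-binomial-thm} gives $2(-1)^Lq^{\binom{L+1}{2}}(q^{-L})_{2L}=2\delta_{L0}$. So with that convention the statement holds literally.
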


\begin{proof}
By using formula \eqref{W_n}, identity \eqref{Andrews-(5.1)}  can be written as follows
\begin{align*}
&\sum_{n=0}^{\infty} \frac{q^{n^2+n} (-e^{i\theta}, -e^{-i\theta})_n}{(q)_{2n}}
=\frac{1}{(q)_\infty} \sum_{n=0}^{\infty} q^{n(3n+1)/2} (1-q^{2n+1} +q^{3n+2} -q^{5n+5})\ U_n(x).
\end{align*}
Expanding   $U_n(x)$ in terms of $H_n(x|q)$ by  using \eqref{U_n-to-Hermite},  multiplying both hand sides by \eqref{multiply-term}, and then  applying the orthogonality relation \eqref{Hermite-orth} and Corollary \ref{integral3-generalize}, the proof is completed.
\end{proof}

Taking $\ell=0$ in Theorem \ref{Andrews-(5.1)-orth}, we  get the identity as follows.

\begin{corollary}
We have
\begin{align}\label{Andrews-(5.1)-orth-simply}
&1+\sum_{n=1}^{\infty} \sum_{m=-\lfloor \frac{n-1}{2} \rfloor}^{\lfloor \frac{n}{2} \rfloor} \frac{(-1)^m \ q^{n(n+1)+m(5m-3)/2} \ (1+q^m)} {(q)_{2n}} {2n-1 \brack n-2m} \\
&\qquad\qquad\qquad\qquad
=\frac{1}{(q)_\infty}\big((q^5, q^{8}, q^{13}; q^{13})_\infty)-q(q, q^{12}, q^{13}; q^{13})_\infty \big).\nonumber
\end{align}
\end{corollary}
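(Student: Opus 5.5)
Set $\ell=0$ in Theorem \ref{Andrews-(5.1)-orth}. On the right-hand side, $(q^{j+1})_0=1$, so it becomes
\[
\frac{2}{(q)_\infty}\sum_{j\ge0}(-1)^j q^{\binom{j+1}{2}+j(6j+1)}\bigl(1-q^{4j+1}+q^{6j+2}-q^{10j+5}\bigr).
\]
Write this as four sums with exponents $13j^2/2+3j/2$, $13j^2/2+11j/2+1$, $13j^2/2+15j/2+2$ and $13j^2/2+23j/2+5$.

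Apply $j\mapsto -j-1$ to the fourth term. This gives $(-1)^j q^{13j^2/2+3j/2}$ over $j\le -1$, with the sign flipping from $-$ to $+$ because of $(-1)^{-j-1}$, so it merges with the first term into the full bilateral sum $\sum_{j\in\mathbb Z}(-1)^jq^{13\binom{j}{2}+8j}$. By \eqref{JTP} this equals $(q^8,q^5,q^{13};q^{13})_\infty$. Likewise, $j\mapsto -j-1$ in the third term gives $(+1)(-1)^{j}q^{13j^2/2+11j/2+1}$ on $j\le-1$, and with the overall signs $-$ (second term) and $+$ (third term) these combine to $-q\sum_{j\in\mathbb Z}(-1)^jq^{13\binom{j}{2}+12j}=-q(q^{12},q,q^{13};q^{13})_\infty$. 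So the right-hand side is twice the stated product side.

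For the left-hand side with $\ell=0$, the inner sum is over $m$ only, with parameter $s=2m$:
\[
q^{\binom{2m}{2}+\binom{m+1}{2}}\Bigl(A_0+q^{2m}A_1-q^{4m+1}A_2-q^{6m+3}A_3\Bigr),
\qquad A_r={2n-1\brack n-2m-r}.
\]
I plan to pair terms by reflection. Set $m\mapsto -m$ in the $A_1$ term; by the symmetry ${2n-1\brack n+2m-1}={2n-1\brack n-2m}$ its binomial becomes $A_0$. Its exponent becomes $\binom{-2m}{2}+\binom{-m+1}{2}-2m=m(5m-3)/2$, whereas the $A_0$ term has exponent $m(5m+3)/2-m\cdot 3$. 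Checking: $\binom{2m}{2}+\binom{m+1}{2}=m(2m-1)+m(m+1)/2=(5m^2-m)/2$. The $A_1$ reflected exponent is $m(2m+1)+m(m-1)/2-2m=(5m^2-m)/2-... $; I will verify that together they give $q^{m(5m-3)/2}(1+q^m)$, up to relabeling. Similarly, $m\mapsto -m-1$ in the $A_3$ term and $m\mapsto -m$ (or $-m-1$) in the $A_2$ term should cancel each other or reproduce a copy of the first pair, producing the overall factor $2$.

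This bookkeeping is the main obstacle. The four binomial terms must collapse exactly to $2\sum_m(-1)^m q^{m(5m-3)/2}(1+q^m){2n-1\brack n-2m}$. The approach mirrors Corollary \ref{integral4}, which is the $q^2$ analogue and has the same shape. I expect the $A_2,A_3$ pair, after reflection, to coincide with the $A_0,A_1$ pair, with the sign tracking $(-1)^{-m-1}=-(-1)^m$ absorbing the minus signs.

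Finally, treat $n=0$ separately. Here $(-e^{i\theta},-e^{-i\theta})_0=1$, so the integral gives $2$ directly, which accounts for the leading $1$. The nonvanishing range of ${2n-1\brack n-2m}$ is $0\le n-2m\le 2n-1$, giving the bounds $-\lfloor (n-1)/2\rfloor\le m\le\lfloor n/2\rfloor$. Multiply by $q^{n(n+1)}/(q)_{2n}$, cancel the factor $2$ on both sides, and the result follows.
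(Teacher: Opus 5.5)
Your route is the paper's: set $\ell=0$ in Theorem~\ref{Andrews-(5.1)-orth}, fold the third and fourth $j$-terms by $j\mapsto -j-1$ and apply \eqref{JTP}, then fold the $q$-binomial terms on the left. Your right-hand side is correct apart from one sign slip. After $j\mapsto -j-1$ the third term is $-(-1)^jq^{13j^2/2+11j/2+1}$ on $j\le -1$, not $+(-1)^j\cdots$, and that minus sign is exactly what lets it merge with the second term into $-q\sum_{j\in\mathbb Z}$. Your separate treatment of $n=0$ is right, and more careful than the paper. The expansion \eqref{basic3} fails at $n=0$, since its right side is an empty sum, so that term has to be computed directly as you do.

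What is missing is the left-hand-side collapse itself, which you leave as an expectation. You correctly guess $m\mapsto -m-1$ for $A_3$, but for $A_2$ you hedge between $m\mapsto -m$ and $m\mapsto -m-1$, and between cancellation and duplication. In fact nothing cancels, and $m\mapsto -m$ on $A_2$ gives ${2n-1\brack n-2m+1}$, which matches no other term. The substitution that works, and the paper's step, is $m\mapsto -m-1$ on both $A_2$ and $A_3$. Write $E(m)=\binom{2m}{2}+\binom{m+1}{2}=(5m^2-m)/2$, so that $E(-m-1)=(5m^2+11m+6)/2$. Using ${2n-1\brack n+2m}=A_1$ and ${2n-1\brack n+2m-1}=A_0$, one gets
\begin{align*}
-(-1)^m q^{E(m)+4m+1}A_2 &\longmapsto (-1)^m q^{(5m^2+3m)/2}A_1=(-1)^m q^{E(m)+2m}A_1,\\
-(-1)^m q^{E(m)+6m+3}A_3 &\longmapsto (-1)^m q^{(5m^2-m)/2}A_0=(-1)^m q^{E(m)}A_0 .
\end{align*}
So the inner sum equals $2\sum_{m}(-1)^m\bigl(q^{(5m^2-m)/2}A_0+q^{(5m^2+3m)/2}A_1\bigr)$. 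Only then does your $m\mapsto -m$ in the $A_1$ part give $q^{m(5m-3)/2}A_0$.

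Also, the $A_0$ exponent is $(5m^2-m)/2=m(5m-3)/2+m$, not $m(5m+3)/2-3m$ as you first wrote. This extra $q^m$ is what produces the factor $(1+q^m)$. All these reindexings run over $m\in\mathbb Z$, which is legitimate because the binomials vanish outside a finite range.
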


\begin{proof}
When $\ell=0$, the right hand side of  Theorem \ref{Andrews-(5.1)-orth} turns to be
\begin{align*}
\frac{2}{(q)_\infty} \sum_{j=0}^{\infty} (-1)^j q^{\binom{j+1}{2} +j(6j+1)} (1 -q^{4j+1} +q^{6j+2} -q^{10j+5}).
\end{align*}
Setting  $j \mapsto -j-1$ into  the third and forth summands, it becomes
\begin{align*}
\frac{2}{(q)_\infty} \sum_{j=-\infty}^{\infty} (-1)^j q^{\binom{j+1}{2} +j(6j+1)} (1 -q^{4j+1}).
\end{align*}
By using Jacobi's triple product identity \eqref{JTP}, it leads to the right hand side.  For the left hand side, by applying $m\mapsto -m-1$ to the third and fourth summands and then simplifying, we complete the proof.
\end{proof}

Noting that by adding   \eqref{Yao-(3.60)-orth-simply} and \eqref{Andrews-(5.1)-orth-simply} together, it implies that
 \begin{align*}
&2+\sum_{n=1}^{\infty} \sum_{m=-\lfloor \frac{n-1}{2} \rfloor}^{\lfloor \frac{n}{2} \rfloor} \frac{(-1)^m q^{n^2 +m(5m-3)/2} (1+q^n) (1+q^m)} {(q)_{2n}} {2n-1 \brack n-2m} \\
&\qquad\qquad\qquad\qquad\qquad
=\frac{1}{(q)_\infty} \big( (q^5, q^{8}, q^{13}; q^{13})_\infty +(q^6, q^{7}, q^{13}; q^{13})_\infty \big).
\end{align*}

From  the identity given by Yao \cite[(3.55)]{Yao25}
\begin{align*}
&1+\sum_{n=1}^{\infty} \frac{q^{n^2+2n} (-q;q^2)_n \prod_{j=0}^{n-1} (1+2xq^{2j}+q^{4j})}{(q^2;q^2)_{2n}}\\
&\qquad\qquad
=\frac{(q^2;q^2)_\infty}{(q)_\infty(q^4;q^4)_\infty} \sum_{n=0}^{\infty} q^{2n^2+n} (1-q^{2n+1}) \big( U_n(x)+U_{n-1}(x) \big),
\end{align*}
we obtain the following triple sum identity with a free variable $\ell$.

\begin{theorem}\label{Yao-(3.55)-orth}
We have
\begin{align*}
&\sum_{n=0}^{\infty} \frac{q^{n(n+2)} (-q; q^2)_{n}}{(q^2; q^2)_{2n}} \sum_{m=-\infty}^{\infty} \sum_{k=0}^{\ell} (-1)^m q^{2\binom{2m+2k-\ell}{2}+ \binom{m+1}{2}} {\ell \brack k}  \\
&\quad
\cdot \Bigg({2n-1 \brack n-2m-2k+\ell}_{q^2} +q^{2(2m+2k-\ell)} {2n-1 \brack n-2m-2k+\ell-1}_{q^2} -q^{4(2m+2k-\ell)+2}  \\
&\qquad\qquad
\cdot {2n-1 \brack n-2m-2k+\ell-2}_{q^2} -q^{6(2m+2k-\ell+1)} {2n-1 \brack n-2m-2k+\ell-3}_{q^2}\Bigg) \\
&=\frac{2(q^2; q^2)_\infty}{(q)_\infty (q^4; q^4)_\infty} \sum_{j=0}^{\infty} (-1)^j q^{\binom{j+1}{2} + (2j+\ell)(4j+2\ell+1)} \\
&\qquad\qquad\qquad\qquad
\cdot \big( 1 -q^{2(2j+\ell)+1} +q^{4(2j+\ell)+3} -q^{6(2j+\ell)+6} \big) (q^{j+1})_{\ell}.
\end{align*}
\end{theorem}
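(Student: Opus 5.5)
The plan follows the proof of Theorem~\ref{Andrews-(5.1)-orth} step by step, using Yao's identity \cite[(3.55)]{Yao25} as the starting point and Corollary~\ref{integral4-generalize} for the left-hand side.

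\emph{Rewriting the right side in terms of $U_n(x)$.} Substituting \eqref{W_n} (that is, $U_n+U_{n-1}$) into Yao's identity and shifting the index in the $U_{n-1}$ part gives
\begin{align*}
\sum_{n\ge0} q^{2n^2+n}(1-q^{2n+1})\big(U_n(x)+U_{n-1}(x)\big)
=\sum_{n\ge0}\Big(q^{2n^2+n}(1-q^{2n+1})+q^{2(n+1)^2+(n+1)}(1-q^{2n+3})\Big)U_n(x).
\end{align*}
The bracket simplifies to $q^{n(2n+1)}\big(1-q^{2n+1}+q^{4n+3}-q^{6n+6}\big)$. This matches the four-term factor in the statement once $n$ is replaced by $2j+\ell$.

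\emph{Rewriting the left side.} Because $\prod_{j=0}^{n-1}(1+2xq^{2j}+q^{4j})=(-e^{i\theta},-e^{-i\theta};q^2)_n$, the term $n=0$ supplies the leading $1$. The left side is therefore $\sum_{n\ge0} q^{n(n+2)}(-q;q^2)_n(-e^{i\theta},-e^{-i\theta};q^2)_n/(q^2;q^2)_{2n}$.

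\emph{Integrating against $H_\ell$.} Next I expand each $U_n(x)$ by \eqref{U_n-to-Hermite}. I then multiply both sides by \eqref{multiply-term} and integrate over $[-\pi,\pi]$.
\begin{itemize}
\item On the right, the integrand is even in $\theta$, which gives the factor $2$. The orthogonality relation \eqref{Hermite-orth} retains only the terms with $n-2j=\ell$, so $n=2j+\ell$.
\item Each surviving term carries the coefficient ${j+\ell\brack j}(-1)^jq^{\binom{j+1}{2}}(q)_\ell=(-1)^jq^{\binom{j+1}{2}}(q^{j+1})_\ell$.
\item Substituting $n=2j+\ell$ gives $q^{(2j+\ell)(4j+2\ell+1)}$ together with the four-term factor, which reproduces the stated right side including the prefactor $\frac{(q^2;q^2)_\infty}{(q)_\infty(q^4;q^4)_\infty}$.
\item On the left, I interchange summation and integration, which is justified by absolute convergence for $|q|<1$. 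Each integral $\frac{(q)_\infty}{2\pi}\int_{-\pi}^{\pi}H_\ell(x|q)(-e^{i\theta},-e^{-i\theta};q^2)_n(e^{2i\theta},e^{-2i\theta})_\infty\,d\theta$ is then exactly what Corollary~\ref{integral4-generalize} evaluates, and it yields the stated inner double sum.
\end{itemize}

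\emph{The $n=0$ term.} This is the step I expect to need the most care. Corollary~\ref{integral4-generalize} is built from \eqref{basic3} with $2n-1$ and is stated for $n\ge1$. For $n=0$ the integrand is $H_\ell$ times the weight, so the true value is $2\delta_{\ell0}$. I would interpret ${-1\brack k}$ as zero and verify that the combined four-term expression still produces the correct $n=0$ contribution. If that check fails, I would treat the $n=0$ term separately, in the same way as the "$1+$" in the $\ell=0$ corollaries.

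Apart from this point and the careful bookkeeping of exponents in the shift, the proof is a direct combination of earlier results.
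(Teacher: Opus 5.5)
Your proposal is the paper's proof. You rewrite Yao's identity (3.55) via \eqref{W_n} as $\sum_{n\ge0}q^{2n^2+n}(1-q^{2n+1}+q^{4n+3}-q^{6n+6})U_n(x)$, expand with \eqref{U_n-to-Hermite}, integrate against \eqref{multiply-term}, and evaluate the right side by \eqref{Hermite-orth} and the left side by Corollary~\ref{integral4-generalize}; on the $n=0$ term, which the paper's proof passes over silently, your check with ${-1\brack k}=0$ fails when $\ell=0$ (it gives $0$ instead of the true value $2\delta_{\ell0}$), so your fallback of treating that term separately is correct and matches the leading $1$ in the paper's $\ell=0$ corollary, and alternatively the analytic convention ${-1\brack K}_{q^2}=(-1)^Kq^{-K(K+1)}$ makes the four-term expression evaluate to exactly $2\delta_{\ell0}$, so under that convention the statement is correct as written.
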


\begin{proof}
The above Yao's identity  can be rewritten  as
\begin{align*}
&\sum_{n=0}^{\infty} \frac{q^{n^2+2n} (-q;q^2)_n (-e^{i\theta}, -e^{-i\theta} ;q^2)_n}{(q^2;q^2)_{2n}}\\
&\qquad\qquad
=\frac{(q^2;q^2)_\infty}{(q)_\infty(q^4;q^4)_\infty} \sum_{n=0}^{\infty} q^{2n^2+n} (1-q^{2n+1}+q^{4n+3}-q^{6n+6})\ U_n(x).
\end{align*}
By using the expansion formula \eqref{U_n-to-Hermite}, multiplying both sides by \eqref{multiply-term}, then applying the orthogonality relation \eqref{Hermite-orth} and Corollary \ref{integral4-generalize}, the proof is complete.
\end{proof}

Taking $\ell=0$ in Theorem \ref{Yao-(3.55)-orth}, it yields the following identity.

\begin{corollary}
We have
\begin{align*}
&1 +\sum_{n=1}^{\infty} \sum_{m=-\lfloor \frac{n-1}{2} \rfloor}^{\lfloor \frac{n}{2} \rfloor} \frac{(-1)^m q^{n(n+2)+m(9m-5)/2} (1+q^m)} {(q; q^2)_{n} (q^4; q^4)_{n}} {2n-1 \brack n-2m}_{q^2} \\
&\qquad\qquad\qquad\qquad
=\frac{(q^2; q^4)_\infty}{(q)_\infty} \big( (q^6, q^{11}, q^{17}; q^{17})_\infty- q(q^2, q^{15}, q^{17}; q^{17})_\infty \big).
\end{align*}
\end{corollary}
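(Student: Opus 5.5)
We specialize Theorem \ref{Yao-(3.55)-orth} to $\ell=0$. Then ${\ell \brack k}=1$, $k=0$, and $(q^{j+1})_0=1$. We treat the two sides separately, folding the four-term expressions on each side into a bilateral sum or a symmetric finite sum. This is the same device as in Corollary \ref{integral4} and in the proof of the corollary to Theorem \ref{Andrews-(5.1)-orth}.

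\textbf{Right-hand side.} With $\ell=0$ it becomes
\[
\frac{2(q^2;q^2)_\infty}{(q)_\infty(q^4;q^4)_\infty}\sum_{j\ge0}(-1)^j q^{\binom{j+1}{2}+2j(4j+1)}\bigl(1-q^{4j+1}+q^{8j+3}-q^{12j+6}\bigr).
\]
Substitute $j\mapsto -j-1$ in the third and fourth summands. One should find that $q^{\binom{j+1}{2}+2j(4j+1)+8j+3}$ becomes $q^{\binom{j+1}{2}+2j(4j+1)+4j+1}$ with the sign flipped, and similarly for the other pair. The result should be
\[
\frac{2(q^2;q^2)_\infty}{(q)_\infty(q^4;q^4)_\infty}\sum_{j=-\infty}^\infty(-1)^j q^{17j^2/2+5j/2}(1-q^{4j+1}).
\]
Checking these exponents is routine bookkeeping. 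Applying Jacobi's triple product \eqref{JTP} with base $q^{17}$ gives
\begin{align*}
&\sum_j (-1)^j q^{17j^2/2+5j/2}=(q^6,q^{11},q^{17};q^{17})_\infty,\\
&\sum_j (-1)^j q^{17j^2/2+13j/2+1}=q\,(q^2,q^{15},q^{17};q^{17})_\infty.
\end{align*}
Finally, $(q^2;q^2)_\infty/(q^4;q^4)_\infty=(q^2;q^4)_\infty$. This yields twice the claimed product side.

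\textbf{Left-hand side.} With $\ell=0$, the inner sum is the Corollary \ref{integral4} computation on the full interval $[-\pi,\pi]$. It equals twice $\sum_m(-1)^m q^{m(9m-5)/2}(1+q^m){2n-1\brack n-2m}_{q^2}$, obtained by sending $m\mapsto -m-1$ in the last two terms and pairing. Note that for $n=0$ the integrand is $1$ and the value is $2$ times the $n=0$ term, giving the leading $1$ after halving. For $n\ge1$, simplify the prefactor using
\[
(q^2;q^2)_{2n}=(q^2;q^4)_n(q^4;q^4)_n .
\]
The factor $(-q;q^2)_n/(q^2;q^4)_n$ equals $1/(q;q^2)_n$. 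Combined with ${2n-1\brack\cdot}_{q^2}$ and $1/(q^4;q^4)_n$, this produces the stated summand. The summation range in $m$ is where the $q^2$-binomial is nonzero, i.e.\ $-\lfloor\frac{n-1}{2}\rfloor\le m\le\lfloor\frac n2\rfloor$. Dividing both sides by $2$ completes the argument.

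\textbf{Main obstacle.} The delicate step is the exponent bookkeeping in the $j\mapsto -j-1$ and $m\mapsto-m-1$ substitutions. Each pair must match exactly, and the $n=0$ term must produce the constant $1$. I would verify both by expanding the first few coefficients in $q$.
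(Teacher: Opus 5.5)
Your proposal is correct and is essentially the paper's own argument: specialize Theorem~\ref{Yao-(3.55)-orth} at $\ell=0$, fold the right side with $j\mapsto -j-1$ and apply Jacobi's triple product with base $q^{17}$, identify the inner left-hand sum with twice Corollary~\ref{integral4}, and simplify $(-q;q^2)_n/(q^2;q^2)_{2n}=1/\big((q;q^2)_n(q^4;q^4)_n\big)$. You also treat the $n=0$ term directly from the integral, since the ${2n-1 \brack \cdot}$ expansion \eqref{basic3} fails there; the paper leaves this implicit, and you handle it correctly.
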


Finally, we remark that if $\ell$ is taken to be a special value greater than $0$ in the theorems obtained in this section, it will lead to a series of identities related to partial theta functions, and more details of partial theta functions can be seen in \cite{And81, AB09}.

\section*{Acknowledgments}
	This work is supported by the National Natural Science Foundation of China (No. 12571351, 12071235), Tianjin Natural Science Foundation (No. 24JCZD\\JC01390), the Research Program on Graduate Education Reform in Tianjin Higher Education Institutions (Grant No. TJYG035) and the Fundamental Research Funds for the Central Universities of China.

\end{document}